\newcommand{\set}[1]{\left\lbrace #1 \right\rbrace}
\newcommand\footnoteref[1]{\protected@xdef\@thefnmark{\ref{#1}}\@footnotemark}
\newcommand{\R}{\mathbb{R}}
\newcommand{\Q}{\mathbb{Q}}
\newcommand{\C}{\mathbb{C}}
\newcommand{\Z}{\mathbb{Z}}
\newcommand{\F}{\mathbb{F}}
\newcommand{\PP}{\mathbb{P}}
\DeclareMathOperator{\Pic}{Pic}
\DeclareMathOperator{\Div}{div}
\DeclareMathOperator{\Aut}{Aut}
\DeclareMathOperator{\spec}{Spec}
\newtheorem{theorem}{Theorem}[section]
\newtheorem{proposition}[theorem]{Proposition}
\newtheorem{lemma}[theorem]{Lemma}
\newtheorem{corollary}[theorem]{Corollary}
\theoremstyle{definition}
\newtheorem{definition}[theorem]{Definition}
\theoremstyle{remark}
\newtheorem{remark}[theorem]{Remark}
\newtheorem{example}[theorem]{Example}
\DeclareMathOperator{\Gal}{Gal}
\DeclareMathOperator{\Sym}{Sym}
\DeclareMathOperator{\Hom}{Hom}
\def\diam#1{\langle#1\rangle}
\newcommand{\Cn}[1]{\Z/{#1}\Z}
\title{Torsion of elliptic curves over cyclic cubic fields}
\author{Maarten Derickx}
\address{Johann Bernoulli Institute, Universiteit Groningen, Nijenborgh 9,
	9747 AG Groningen, The Netherlands}
\email{maarten@mderickx.nl}
\author{Filip Najman}
\address{University of Zagreb, Bijeni\v{c}ka cesta 30, 10000 Zagreb, Croatia}
\email{fnajman@math.hr}
\date{\today}
\keywords{Elliptic curves, modular curves, rational points}
\subjclass[2010]{11G05}
\providecommand\@dotsep{5}
\renewcommand{\listoftodos}[1][\@todonotes@todolistname]{%
	\@starttoc{tdo}{#1}}
\newcommandx{\filip}[2][1=]{\todo[linecolor=blue,backgroundcolor=blue!25,bordercolor=blue,#1]{#2}}
\newcommandx{\maarten}[2][1=]{\todo[linecolor=green,backgroundcolor=green!25,bordercolor=green,#1]{#2}}
\begin{document}
\begin{abstract}{We determine all the possible torsion groups of elliptic curves over cyclic cubic fields, over non-cyclic totally real cubic fields and over complex cubic fields.}
\end{abstract}
\thanks{The second author was supported by the QuantiXLie Centre of Excellence, a project
co-financed by the Croatian Government and European Union through the
European Regional Development Fund - the Competitiveness and Cohesion
Operational Programme (Grant KK.01.1.1.01.0004).}

\maketitle

\section{Introduction}
An important problem in the theory of elliptic curves is to determine the possible torsion groups of elliptic curves over number fields of degree $d$. Mazur solved the problem for $d=1$ \cite{maz} and Kamienny \cite{kam}, building on the work of Kenku and Momose \cite{km}, solved the problem for $d=2$. Recently, Derickx, Etropolski, van Hoeij, Morrow and Zuerick-Brown announced the solution of the problem for $d=3$ \cite{dehmz}, building on the work of Parent \cite{par1,par2}. For $d>3$ the problem remains unsolved at the moment.

A question that naturally arises is which of the groups that arise as torsion groups of elliptic curves over number fields of degree $d$ arise over some natural subset of the set of number fields of degree $d$. These subsets of the set of all number fields of degree $d$ can be chosen to be, perhaps most naturally, the subset of real (or totally real) number fields, the subset of complex number fields, and the subset of number fields whose normal closure over $\Q$ has Galois group isomorphic to some prescribed group $G$. Throughout the paper, by abuse of language we say that a number fields $K$ has Galois group $G$ over $\Q$ if the Galois group over $\Q$ of the normal closure of $K$ over $\Q$ is $G$.

These problems are of course meaningless for the case $d=1$, and for $d=2$ one can only consider the subdivision of quadratic fields into real and imaginary quadratic fields. The possibilities in each of these cases (for $d=2$) follow directly from \cite{bbdn} and are summed up in \Cref{d2_thm}.
Thus the problem has been solved completely for $d=2$, so it is natural to consider the case $d=3$, which is the last case where all the possible torsion groups are known. The main result of the paper is the determination of all possible torsion groups of elliptic curves over
\begin{itemize}
\item[a)] All cubic fields with Galois group $\Z/3\Z$.
\item[b)] All complex cubic fields.
\item[c)] All totally real cubic fields with Galois group $S_3$.
\end{itemize}

Parts of some proofs of our results are based on computations in Magma \cite{magma}. Some of these computations depend on a magma package writen by Solomon Vishkautsan and the first author\cite{dv}.  All computations done in Magma for this paper can be found at \url{https://github.com/wishcow79/chabauty/tree/master/papers/cyclic_cubic}.

We would like to thank Solomon Vishkautsan for making his Chabauty code public and his help in explaining on how to use it.

\section{Strategy for determining the Galois groups of degree $d$ points on a curve}
	
	The problem described in the introduction easily translates into the question which cubic fields can occur as the field of definition of degree $3$ points on certain modular curves. Ignoring the totally real and complex cubic field questions for a moment and only focussing on the Galois group, one is then led to the more general question which Galois groups can occur as the Galois group of a degree $d$ point on some curve $X/\Q$. For the rest of this section $X/\Q$ will be a smooth projective and geometrically irreducible curve.
	
	Let $L/\Q$ be a degree $d$ extension, $\widetilde L$ be its normal closure and $G := \Gal(\widetilde L /\Q)$ its Galois group. A choice of an enumeration $\sigma_1,\ldots,\sigma_d$ of the $d$ elements in $\Hom_\Q(L,\widetilde L)$ allows one to see $G$ which naturally acts on $\Hom_\Q(L,\widetilde L)$ as acting on $1,\ldots,d$ so that $G \subseteq S_d$. Let now $H \subset S_d$ be any subgroup, then $H$ naturally acts on $X^d$ by permuting the factors of the direct product, the corresponding quotient $X^d/H$ will be called the $H$-symmetric product of $X$ and denoted by $X^{(H)} := X^d/H$. If $H=S_d$ then the $H$ symmetric product $X^{(S_d)}$ is nothing other then the $d$-th symmetric product $X^{(d)}$, note that contrary to $X^{(d)}$ it is not always true that $X^{(H)}$ is smooth, the simplest non-smooth example is $X^{(\Z/3\Z)}$, however $X^{(H)}$ is smooth if one stays away from the fixed points of $H$, in particular it is smooth away from the image of $\Delta(X^d)$, the locus in $X^d$ where at least two coordinates coincide. If one lets $s \in X(L)$ be a point and define $s^{(\sigma)} := (\sigma_1(s),\ldots,\sigma_d(s)) \in X^d(\widetilde L)$ and let $s^{(H)}$ be its image in $X^{(H)}(\widetilde L)$, then by definition one has that $s^{(H)} \in X^{(H)}(\Q)$ if one has that the action of $G = \Gal(\widetilde L /\Q)$ on $s^{(H)}$ is trivial. By construction the latter happens if $G \subseteq H$ as subgroups of $S_d$, and if additionally $s \in X(L)$ is not definable over any subfield of $L$ then one even has that the action of $G$ on $s^{(H)}$ is trivial if and only if $G \subseteq H$.
	
	\begin{remark}In fact there is a converse to this construction, namely every $x \in X^{(H)}(\Q)$ that does not come from some $X^{(H')}(\Q)$ for some $H'\subsetneq H$ has to be of the from $s^{(H)}$ as above.
	\end{remark}
So this turns the study of degree $d$ points on $X$ whose Galois group is isomorphic to $H$ to a study of the rational points of $X^{(H)}$ (that do not come from $X^{(H')}(\Q)$ for some $H' \subsetneq H$). This can be a difficult problem in general depending on what $X$, $d$ and $H$ are. For example for $X=\PP^1$ this problem is equivalent to the inverse Galois problem for $H$, and for $X=E$ an elliptic curve and $H = \Z/d\Z$ this is closely related to the rank growth of $E$ over cyclic extensions - see for example \cite[Section 4]{fhk}.

The cases that need to be dealt with in this paper are however more feasible than the general problem, because we restrict only to $d=3$. In this case there are only the groups $S_3$ and $\Z/3\Z$ to consider and the question becomes which points on $X^{(3)}(\Q)$ come from $X^{(\Z/3\Z)}(\Q)$ and which do not. For most of the relevant modular curves that actually have a cubic point it is easy to construct infinitely many points on $X^{(3)}(\Q)$ that come from $X^{(\Z/3\Z)}(\Q)$ as well as infinitely many that don't (see \Cref{mainthm1,mainthm2,mainthm3} and their proofs, as well as \cite{jeon}). This is done for example by constructing functions $f \in \Q(X)$ such that the function field extension $\Q(f) \subset \Q(X)$ has either Galois group $\Z/3\Z$ or $S_3$. The most problematic cases are $X_1(16)$ and $X_1(20)$, both of which have functions of degree $3$ where the function field extension has Galois group $S_3$, but there are no obvious constructions for cubic points with Galois group $\Z/3\Z$. However, in these two cases one can prove that every point of degree $3$ on $X_1(N)$ comes from a function $f : X_1(N) \to \PP^1$ of degree $3$ (see \Cref{cor:inverse_image}) and that additionally there are only finitely many functions of degree $3$ (see \Cref{lem:16_1,lem:20_1}). So the question is now what the Galois group of $f^{-1}(t)$ can be for $t \in \PP^1(\Q)$.

Going back to the more general case, let $f : X \to \PP^1$  be a function of degree $d$  and $\widetilde{\Q(X)}$ be the normal closure of $\Q(X)$ seen as a finite extension of $\Q(\PP^1)$ and let $H \subset S_d$ be its Galois group. The generic point of $X$ gives a point $\eta \in X(\Q(X))$.  Since $\Gal(\widetilde{\Q(X)}/\Q(\PP^1)) = H$ one has that $\eta^{(H)} \in X^{(H)}(\widetilde{\Q(X)})$ is actually a $\Q(\PP^1)$-valued point. Define $f^* : \PP^1 \to X^{(H)}$ to be the morphism corresponding to $\eta^{(H)}$. Now if $t \in \PP^1(\Q)$ is not a ramification point and $s \in f^{-1}(t)$ is a point whose field of definition $L$ is of degree $d$, then one has $f^*(t) = s^{(H)}$. So the question whether the Galois group of $L$ is smaller than $H$ becomes equivalent to whether $f^{*}(t) \in X^{(H)}(\Q)$ comes from a rational point on $X^{(H')}$ for some $H' \subsetneq H$. Now for a $H' \subsetneq H$ define $C_{H'} := X^{(H')} \times_{X{(H)}} \PP^1$
$$\xymatrix{C_{H'} \ar[r]\ar[d] & \PP^1 \ar[d]^{f^*} & \spec \widetilde{\Q(X)}^{H'} \ar[d]^{\eta^{(H')}}\ar[r] & \spec \Q(\PP^1) = \spec \widetilde{\Q(X)}^{H} \ar[d]^{ \eta^{(H)}}\\
X^{(H')} \ar[r] &X^{(H)} & X^{(H')} \ar[r] &X^{(H)}}
$$
Now $C_{H'}$ is a curve, so the question whether the Galois group can get smaller than $H$ has been turned into a question about rational points on curves. The generic fiber of $C_{H'}$ is just $\widetilde{\Q(X)}^{H'}$ with the map $\eta^{(H')}$. This means that it is possible to compute explicit equations for the normalisation of $C_{H'}$. In the case that $H=S_d$ and $H'=A_d$ one can even be more explicit. Indeed if one writes $\Q(X) \cong \Q(f)[t]/g$ for some $g$ of degree $d$ in $t$ then $\Q(C_{A_d}) \cong \Q(f)[t]/(t^2-\Delta(g))$.

\section{Previously known results}
In this section we describe known results, some of which we will use in our proofs.

As mentioned in the introduction, for quadratic fields we have a classification of which torsion groups appear over real and which over imaginary quadratic fields. Although the result is not used in this paper, we state it for completeness, as it is not explicitly stated in \cite{bbdn}, although it follows directly from the results proved in this paper.

\begin{theorem}\label{d2_thm} \cite{bbdn}
\begin{itemize}

\item[a)] Let $E$ be an elliptic curve over an imaginary quadratic number field $K$. Then $E(K)_{tors}$ is isomorphic to one of the following groups:
\begin{align*} \Z/n\Z & \text{ for } n=1,\ldots ,12, 14,15,16\\
\Z/2\Z \oplus\Z/2n\Z  & \text{ for } n=1,\ldots ,6\\
\Z/3\Z \oplus \Z/3n\Z& \text{ for } n=1,2\\
\Z/4\Z \oplus \Z/4\Z. & \
\end{align*}
Each of the cases arises for infinitely many non-isomorphic elliptic curves.
\item[b)] Let $E$ be an elliptic curve over a real quadratic number field $K$. Then $E(K)_{tors}$ is isomorphic to one of the following groups:
\begin{align*} \Z/n\Z & \text{ for } n=1,\ldots ,16,18\\
\Z/2\Z \oplus\Z/2n\Z  & \text{ for } n=1,\ldots ,6\\
\end{align*}
Each of the cases arises for infinitely many non-isomorphic elliptic curves.
\end{itemize}
\end{theorem}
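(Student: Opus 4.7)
The plan is to start from the Kenku--Momose--Kamienny classification of torsion groups over quadratic fields and decide, for each $T$ in that list, whether it occurs over infinitely many real quadratic, infinitely many imaginary quadratic, or both types of fields. Every $T$ already in Mazur's list occurs for infinitely many $E/\Q$, so base change realises it over infinitely many fields of both signatures; this disposes of $\Cn{n}$ for $n\in\{1,\dots,10,12\}$ and $\Cn{2}\oplus\Cn{2n}$ for $n\in\{1,\dots,4\}$. The remaining cases are $\Cn{11}$, $\Cn{13}$, $\Cn{14}$, $\Cn{15}$, $\Cn{16}$, $\Cn{18}$, $\Cn{2}\oplus\Cn{10}$, $\Cn{2}\oplus\Cn{12}$, $\Cn{3}\oplus\Cn{3}$, $\Cn{3}\oplus\Cn{6}$, and $\Cn{4}\oplus\Cn{4}$.

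The groups containing a full $N$-torsion subgroup are immediate: by the Weil pairing, $E(K)\supseteq(\Z/N\Z)^2$ with $K/\Q$ quadratic forces $K\supseteq\Q(\zeta_N)$. Hence $\Cn{3}\oplus\Cn{3}$ and $\Cn{3}\oplus\Cn{6}$ can only be realised over $K=\Q(\sqrt{-3})$, while $\Cn{4}\oplus\Cn{4}$ can only be realised over $K=\Q(i)$. Each of these three groups is therefore automatically imaginary, and \cite{bbdn} exhibits infinitely many non-isomorphic elliptic curves over the respective field realising them.

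For the remaining groups $T$, the relevant modular curve (either $X_1(T)$ or $X_1(M,N)$) has infinitely many quadratic points, each of which can be parametrized as $(E_t,P_t)$ with $E_t$ defined over $\Q\bigl(\sqrt{d(t)}\bigr)$ for an explicit $d(t)\in\Q(t)$. One reads off the signature from the sign of $d(t)$ at rational $t$. For $\Cn{11},\Cn{14},\Cn{15},\Cn{16},\Cn{2}\oplus\Cn{10}$, and $\Cn{2}\oplus\Cn{12}$ the function $d(t)$ takes both positive and negative values at infinitely many rationals, yielding infinite families over real and over imaginary quadratic fields. For $\Cn{13}$ and $\Cn{18}$, by contrast, the hyperelliptic models of $X_1(13)$ and $X_1(18)$ produced in \cite{bbdn} are of the form $y^2=d(x)$ with $d(x)$ strictly positive on $\R$, so every quadratic point arising from the double cover is real, which excludes $\Cn{13}$ and $\Cn{18}$ from occurring over any imaginary quadratic field while still producing infinitely many real quadratic realisations.

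The main obstacle is this last sign analysis: one has to use the explicit hyperelliptic equations from \cite{bbdn} to verify that the sextics defining $X_1(13)$ and $X_1(18)$ have no real roots, and similarly to check that the relevant $d(t)$ does change sign in each of the ``mixed'' cases. The other steps are either a direct application of the Weil pairing or straightforward bookkeeping against Mazur's list and the parametrizations already worked out in \cite{bbdn}.
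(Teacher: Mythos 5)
The paper itself contains no proof of this theorem: it is stated only for completeness and attributed entirely to \cite{bbdn}, so the comparison can only be with the arguments of that reference, which your plan essentially reconstructs. Your ingredients --- reduction to the Kenku--Momose--Kamienny list, base change from $\Q$ for Mazur's groups (noting that a fixed $E/\Q$ gains torsion in only finitely many quadratic fields), the Weil pairing forcing $\Cn{3}\oplus\Cn{3n}$ into $\Q(\sqrt{-3})$ and $\Cn 4\oplus\Cn 4$ into $\Q(i)$, and the sign analysis of the discriminant of the quadratic parametrizations --- are the right ones and are the ones used in \cite{bbdn}.

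There is, however, a genuine gap in the one step that actually excludes something, namely ruling out $\Cn{13}$ and $\Cn{18}$ over imaginary quadratic fields. Positivity of the sextics defining $X_1(13)$ and $X_1(18)$ only shows that the quadratic points \emph{lying in the hyperelliptic pencil} are real; it does not by itself exclude other quadratic points. For a genus-$2$ curve $C$ over $\Q$ the map $C^{(2)}(\Q)\to\Pic^2(C)(\Q)$ contracts the hyperelliptic $\PP^1$ and is injective away from it, so even when the Jacobian has rank $0$ there may be up to $\#J(\Q)-1$ further quadratic points outside the pencil, and a priori some of these could be defined over imaginary quadratic fields. To close the argument one must compute that $J_1(13)(\Q)$ and $J_1(18)(\Q)$ are finite, enumerate the classes hit by effective degree-$2$ divisors, and check that every non-cuspidal point of $X_1(N)^{(2)}(\Q)$ for $N=13,18$ does lie in a fiber of the hyperelliptic map --- exactly the structure of \Cref{prop:cusps_surject} and \Cref{cor:inverse_image} in this paper, one degree down, and exactly what \cite{bbdn} verifies. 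With that supplement your plan is complete.
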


The following result tells us which are the possible torsion groups of elliptic curves over all cubic fields.

\begin{theorem}\cite{dehmz} \label{thm_dehmz}
The possible torsion groups of elliptic curves over cubic fields are
$$\Cn n \text{ for }n= 1\ldots 16, 18, 20,21,$$
$$\Cn 2 \times \Cn {2m} \text{ for }m=1\ldots 7.$$
There exists $1$ curve with torsion $\Cn {21}$.
\end{theorem}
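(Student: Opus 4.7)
The plan is to determine the finite list of candidate torsion groups using sharp bounds on the prime divisors of $|E(K)_{\mathrm{tors}}|$ for $K$ cubic, and then for each candidate either exhibit an infinite family (showing the group occurs) or rule out all but finitely many curves via modular-curve point counts.

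First I would collect the bounds. Parent's work \cite{par1,par2} shows that no $X_1(p^n)$ acquires non-cuspidal cubic points for $p^n$ sufficiently large; combined with Kamienny-Mazur style arguments (formal immersion criteria applied to $X_1(N)^{(3)} \to J_1(N)$) this limits the primes that can divide $|E(K)_{\mathrm{tors}}|$ to $\{2,3,5,7,11,13\}$ and further restricts the prime powers. From here, the classification over all number fields of a given degree reduces to examining $X_1(N)$ for a short list of $N$'s, together with the $X_1(2,2m)$ curves for $m \le 7$ or so.

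Next I would split the list into ``easy'' and ``hard'' cases. For $N$ with $X_1(N)$ of genus $0$, or of genus $1$ with positive rank over $\Q$, one parametrizes cubic points via $\Q$-rational degree-$3$ maps to $\PP^1$, using the strategy of Section~$2$: for a function $f:X_1(N)\to\PP^1$ of degree $3$ one gets a morphism $f^*:\PP^1\to X_1(N)^{(3)}$ producing infinitely many cubic points for each $N \in \{1,\ldots,16,18,20\}$ and each $2m \in \{2,\ldots,14\}$. (For $m=7$ one must use the elliptic curve $X_1(2,14)$ and an explicit cubic point of infinite order on its Jacobian.) This realizes everything on the stated list other than $\Z/21\Z$.

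For the ``hard'' cases one has to show no further torsion occurs and to isolate the sporadic $\Z/21\Z$ example. For each of the finitely many $N$ that are not on the list (e.g. $N=17,19,22,25,\ldots$), the genus of $X_1(N)$ is $\ge 2$ and one applies Chabauty-style techniques on the symmetric square or cube, using the Magma/Chabauty package of the first author with Vishkautsan \cite{dv}: one computes a basis of differentials vanishing on the image of $X_1(N)^{(3)}$ in $J_1(N)$ tensored with $\Q_p$ for a suitable $p$ of good reduction, bounds the rank of $J_1(N)(\Q)$, and concludes that the only rational points of $X_1(N)^{(3)}$ are cuspidal except in the $N=21$ case, where one isolates the unique sporadic cubic point. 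The main obstacle is this last step: controlling the Mordell-Weil rank of $J_1(N)(\Q)$ (or of a well-chosen quotient) for the finitely many remaining $N$ well enough to make a Chabauty bound work, and verifying that the sporadic cubic point on $X_1(21)$ is genuinely unique. Once these rational-point computations are carried out, the three parts of the theorem follow by collating the list of $N$'s for which $X_1(N)$ (or $X_1(2,2m)$) has non-cuspidal cubic points.
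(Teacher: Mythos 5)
This statement is not proved in the paper at all: it is quoted verbatim from the preprint \cite{dehmz} of Derickx, Etropolski, van Hoeij, Morrow and Zureick-Brown, so there is no internal argument to compare your proposal against. What you have written is a plausible high-level outline of how that external paper proceeds --- Parent's bounds \cite{par1,par2} restricting the primes dividing cubic torsion to $\{2,3,5,7,11,13\}$, a reduction to finitely many levels $N$, explicit degree-$3$ functions on the trigonal $X_1(N)$ to realize the groups that do occur, and symmetric-power/Chabauty/Mordell--Weil-sieve arguments to kill the remaining levels, with the unique $\Z/21\Z$ curve being Najman's sporadic point on $X_1(21)$ \cite{naj}.

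As a proof, however, your text is a plan rather than an argument: every step that carries actual content is deferred. The formal-immersion criteria for $X_1(N)^{(3)}\to J_1(N)$ have to be verified level by level (and they fail at some levels, which is why \cite{dehmz} needs several distinct techniques, not just Chabauty on the symmetric cube); the rank and torsion computations for the relevant $J_1(N)(\Q)$ or suitable quotients are nontrivial; and the uniqueness of the $\Z/21\Z$ example is itself a delicate rational-point computation on $X_1(21)^{(3)}$. None of these is supplied, so the proposal cannot be accepted as a proof of the theorem --- but since the present paper also treats the statement as a black box, the appropriate course is simply to cite \cite{dehmz} rather than to reprove it.
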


\begin{remark}\label{rem1}
The one curve with $\Cn {21}$ torsion was found in \cite{naj} and is defined over a cyclic cubic field.
\end{remark}

Jeon \cite{jeon} determined the groups that appear over cyclic cubic fields infinitely often.
\begin{theorem}\cite{jeon} \label{thm:jeon}
There exists infinitely many elliptic curves over cyclic cubic fields with torsion:
$$\Cn n \text{ for }n= 1\ldots 15, 18,$$
$$\Cn 2 \times \Cn {2m} \text{ for }m=1\ldots 7.$$
No other torsion group appears over infinitely many cyclic cubic fields.
\end{theorem}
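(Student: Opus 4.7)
The plan is to split the theorem into its two assertions: (i) that each group in the list is realized by infinitely many non-isomorphic elliptic curves over cyclic cubic fields, and (ii) that no group from \Cref{thm_dehmz} outside the list has this property. The groups to exclude in (ii) are precisely $\Cn{16}$, $\Cn{20}$ and $\Cn{21}$.

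For the positive direction, the goal for each listed $G$ is to produce a degree-$3$ morphism $f : X_1(G) \to \PP^1_\Q$ whose function field extension $\Q(\PP^1) \subset \Q(X_1(G))$ is Galois with group $\Z/3\Z$. By the discussion of Section~2, the fibers $f^{-1}(t)$ over a generic $t \in \PP^1(\Q)$ are then cubic points whose residue field is cyclic of degree $3$, and away from the proper closed locus where the point lifts to a curve with torsion strictly containing $G$ they realize the group $G$ exactly. When $X_1(G) \cong \PP^1_\Q$—this covers $\Cn n$ for $n \le 10$ and $n=12$, as well as $\Cn 2 \oplus \Cn{2m}$ for $m \le 2$ and $\Cn 2 \oplus \Cn 8$ (and indeed any case of genus $0$)—one may take $f$ to be any cyclic degree-$3$ rational function on $\PP^1$, e.g.\ $t \mapsto t^3$ after a suitable Möbius change. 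For the remaining groups, where $X_1(G)$ has positive genus (namely $\Cn{11}$, $\Cn{13}$, $\Cn{14}$, $\Cn{15}$, $\Cn{18}$, $\Cn 2 \oplus \Cn{2m}$ for $m \in \{3,5,6,7\}$), the task is to construct a cyclic degree-$3$ function explicitly, typically by exploiting extra moduli structure—for example quotient maps by cyclic subgroups of Atkin–Lehner-type involutions or by automorphisms induced by cyclic isogenies of the corresponding degree—and then verifying the Galois group of the function field extension.

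For the negative direction, the case $\Cn{21}$ is immediate from \Cref{rem1}, since only one elliptic curve over a cubic field has this torsion at all. For $\Cn{16}$ and $\Cn{20}$ one invokes the strategy of Section~2: every cubic point on $X_1(16)$ and $X_1(20)$ is a fiber of some degree-$3$ morphism $f : X_1(N) \to \PP^1$ (via \Cref{cor:inverse_image}), and by \Cref{lem:16_1,lem:20_1} there are only finitely many such morphisms up to $\PP^1$-automorphism. For each $f$ the function field extension has Galois group $S_3$ (otherwise the group $\Cn{16}$ or $\Cn{20}$ would appear in the list by the positive-direction argument), and the cubic points with cyclic Galois group correspond exactly to rational points on the associated curve $C_{A_3} = X_1(N)^{(A_3)} \times_{X_1(N)^{(S_3)}} \PP^1$. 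Writing $\Q(X_1(N)) \cong \Q(f)[t]/g$ with $\deg_t g = 3$, this curve is cut out by $y^2 = \Delta(g)$; one then shows $C_{A_3}(\Q)$ is finite for each $f$, for instance by a genus bound combined with Faltings or an explicit Chabauty argument.

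The main obstacle, as in the current paper, will be the negative direction: locating all degree-$3$ functions on $X_1(16)$ and $X_1(20)$ and, for each, verifying that the discriminant curve $C_{A_3}$ has only finitely many rational points. The positive direction is labor-intensive but essentially systematic once a cyclic degree-$3$ cover has been exhibited for each group.
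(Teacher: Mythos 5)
The paper does not prove this statement: it is quoted verbatim from Jeon's paper \cite{jeon}, so there is no internal proof to compare against and your proposal has to stand on its own. Your negative direction is sound in outline and is essentially the machinery this paper develops in Section 4: excluding $\Cn{21}$ via \Cref{rem1}, and reducing $\Cn{16}$, $\Cn{20}$ to rational points on the discriminant curves $y^2=\Delta(f_i)$ via \Cref{cor:inverse_image} and \Cref{lem:16_1,lem:20_1}. For Jeon's statement you only need \emph{finiteness} there, which follows from Faltings since every one of these curves has genus $\geq 2$ (the paper needs the complete determination, hence Chabauty and Mordell--Weil sieving). One slip: you argue each $f_i$ has monodromy $S_3$ "otherwise the group would appear in the list," which assumes the conclusion; instead one checks directly that $\Delta(f_i)$ is not a square in $\Q(t)$, which is immediate from the explicit equations.

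The genuine gap is in the positive direction. Your plan for every positive-genus case is a degree-$3$ map $X_1(G)\to\PP^1$ with cyclic function-field extension, but for the five genus-one, rank-zero curves in the list --- $X_1(11)$, $X_1(14)$, $X_1(15)$, $X_1(2,10)$, $X_1(2,12)$ --- no such map exists: a cyclic degree-$3$ cover $X\to\PP^1$ with $X$ of genus $1$ forces, by Riemann--Hurwitz, an order-$3$ automorphism with exactly three fixed points, hence $j(X)=0$, which none of these curves satisfies. For these cases the correct mechanism is rank growth: one exhibits a cyclic cubic field $K$ with $\rk X_1(G)(K)>0$ (this is the phenomenon alluded to in Section 2, and is exactly what the paper does for $X_1(2,12)$ in the proof of \Cref{mainthm3}); a single such $K$ already yields infinitely many curves. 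Diamond-operator quotients do give cyclic degree-$3$ maps to $\PP^1$ for $X_1(13)$, $X_1(18)$ and $X_1(2,14)$, so your strategy works there, but note that Atkin--Lehner \emph{involutions} have order $2$ and cannot produce degree-$3$ quotients. Two further corrections: $X_1(2,6)$ (your $m=3$) has genus $0$, not positive genus; and for the genus-$0$ cases $t\mapsto t^3$ is \emph{not} cyclic over $\Q$ --- $\Q(t)/\Q(t^3)$ has Galois closure with group $S_3$ since $\zeta_3\notin\Q$, and pre- or post-composing with M\"obius transformations does not change the Galois group --- you need instead the quotient of $\PP^1$ by an order-$3$ element of $\mathrm{PGL}_2(\Q)$ such as $t\mapsto 1/(1-t)$, whose generic fibres are the simplest cubic fields.
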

In this paper, we will among other things solve the problem of which torsion groups appear at all (not just infinitely often) over cyclic cubic fields.

A recent result of Bruin and the second author shows that the torsion group $\Cn 2 \times \Cn {14}$ appears only over cyclic cubic fields.

\begin{theorem}\cite[Theorem 1.2.]{bn}
If an elliptic curve $E$ over a cubic field $K$ has torsion $\Cn 2 \times \Cn {14}$, then $K$ is cyclic.
\end{theorem}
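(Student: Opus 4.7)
The plan is to reduce, via the $2$-torsion subfield of $K$, to an analysis of the mod-$7$ Galois representation of $E$.

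Since $E(K)_{\tors}\supseteq\Cn 2\times\Cn 2$, the field $\Q(E[2])$ is contained in $K$. The extension $\Q(E[2])/\Q$ is Galois with group a subgroup of $\Aut(E[2])\cong S_3$, and being contained in the cubic field $K$ forces $[\Q(E[2]):\Q]\in\{1,3\}$. If the degree is $3$, then $\Q(E[2])=K$ is a Galois cubic extension, hence cyclic, and we are done. So one may assume $E[2]\subseteq E(\Q)$; in particular $E$ has a model over $\Q$ with full rational $2$-torsion. Now let $P\in E(K)$ have order $7$. By Mazur's theorem $P\notin E(\Q)$, hence $\Q(P)=K$. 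Set $C:=\langle P\rangle$; then $\Q(C)\subseteq K$ has degree $1$ or $3$.

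If $\Q(C)=\Q$, then $E/\Q$ admits a rational $7$-isogeny, and the action of $\Gal(\Qbar/\Q)$ on $C\setminus\{0\}$ factors through a subgroup $H\subseteq(\Z/7\Z)^{\times}\cong\Cn 6$. Since the orbit of $P$ under $H$ has length $[\Q(P):\Q]=3$, and the unique subgroup of $\Cn 6$ with a length-$3$ orbit on the six non-zero elements of $C$ is the order-$3$ subgroup $\{1,2,4\}$, we conclude $H\cong\Cn 3$. Therefore $\Q(P)/\Q$ is Galois of degree $3$ (as the fixed field of a kernel), so $K$ is cyclic cubic.

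The remaining case is $\Q(C)=K$: here the Galois orbit of $C$ in $\PP^1(\F_7)$ has length $3$, and the Galois closure of $K/\Q$ equals the image of $\Gal(\Qbar/\Q)$ under its permutation action on this orbit, a subgroup of $S_3$ equal to either $\Cn 3$ (giving $K$ cyclic, as desired) or $S_3$ (giving $K$ non-cyclic, which must be excluded). The crux is to rule out an elliptic curve $E/\Q$ that simultaneously has full rational $2$-torsion and a mod-$7$ Galois image acting as $S_3$ on some length-$3$ orbit of cyclic $7$-subgroups. I expect to handle this by identifying the corresponding moduli data with $\Q$-rational points on an auxiliary modular curve---a common cover of $X(2)$ and of the appropriate subgroup-modular curve $X_H(7)$ attached to the prescribed mod-$7$ image---and then showing that this cover has no non-cuspidal $\Q$-rational points. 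The main obstacle is this non-existence statement, whose difficulty depends on the genus and the Jacobian decomposition of the auxiliary curve; I would first attempt a rank argument on the Jacobian, failing which an explicit Chabauty-type computation in the spirit of the strategy described earlier in the paper.
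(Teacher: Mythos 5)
First, note that this statement is not proved in the paper at all: it is quoted verbatim from \cite[Theorem 1.2]{bn}, where the proof is a global argument about the modular curve $X_1(2,14)$ (every cubic point lies in a fibre of the degree-$3$ cyclic quotient map $X_1(2,14)\to X_1(2,14)/\diam{d}$ induced by an order-$3$ diamond operator, and fibres of a degree-$3$ Galois cover over $\Q$-rational points can only produce cyclic cubic fields). Measured against that, your proposal has a fatal gap at the very first step: you write ``the field $\Q(E[2])$ is contained in $K$'' and then work throughout with the action of $\Gal(\Qbar/\Q)$ on $E[2]$ and on the $7$-torsion, which tacitly assumes that $E$ is defined over $\Q$. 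The theorem is about elliptic curves defined over the cubic field $K$, and such curves do not in general descend to $\Q$; indeed, by Jeon's theorem quoted in this paper, $\Cn 2\times\Cn{14}$ occurs for infinitely many elliptic curves over cyclic cubic fields, and these have infinitely many distinct, typically non-rational, $j$-invariants. For a curve genuinely defined only over $K$, the fields $\Q(E[2])$ and $\Q(P)$ are not defined, Mazur's theorem does not apply, and there is no mod-$7$ representation of $\Gal(\Qbar/\Q)$ to analyse. Your argument therefore at best treats the (thin) subfamily of base changes of rational elliptic curves, which is a different and much easier statement.

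Within that restricted setting your Galois-theoretic reductions are correct (the degree of $\Q(E[2])$ is $1$ or $3$ and in the latter case $K$ is automatically cyclic; if $E$ has a rational $7$-isogeny the orbit argument through $(\Z/7\Z)^{\times}$ does force $\Q(P)$ to be cyclic), but the case you yourself identify as the crux --- ruling out a mod-$7$ image acting as $S_3$ on an orbit of three cyclic subgroups, compatibly with full rational $2$-torsion --- is only sketched as a plan (``I expect to handle this by\ldots''), with the decisive non-existence statement left unproved. So even granting the descent to $\Q$, the proof is incomplete. To prove the actual theorem you would need to replace the whole Galois-representation framework by an analysis of the cubic points of $X_1(2,14)$ itself, as in \cite{bn}.
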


\section{Torsion groups over cyclic cubic fields}
In this section we classify the possible torsion groups of elliptic curves over cyclic cubic fields.

We want to prove:
\begin{theorem}\label{mainthm1}
Let $E$ be an elliptic curve over a cyclic cubic field $K$. Then $E(K)_{tors}$ is isomorphic to one of the following groups:
$$\Cn n \text{ for }n= 1\ldots 16, 18, 21,$$
$$\Cn 2 \times \Cn {2m} \text{ for }m=1\ldots 7.$$
For each group $G$ from the list above there exists a cyclic cubic field $K$ and an elliptic curve $E/K$ such that $E(K)_{tors}\simeq G$.
\end{theorem}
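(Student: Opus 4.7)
The plan is to combine the classification of all possible torsion over cubic fields with an explicit analysis of the two hardest modular curves. By Theorem \ref{thm_dehmz}, any torsion group of an elliptic curve over a cubic field appears in the stated list except that $\Cn{20}$ also appears over general cubic fields. So the proof reduces to two tasks: realizing every group in the list over some cyclic cubic field, and ruling out $\Cn{20}$ over cyclic cubic fields.

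For the realizations, Theorem \ref{thm:jeon} supplies infinitely many cyclic cubic fields giving $\Cn n$ for $n\in\{1,\ldots,15,18\}$ and $\Cn 2\times\Cn{2m}$ for $m\in\{1,\ldots,7\}$, while Remark \ref{rem1} provides the example realizing $\Cn{21}$. The only remaining group is $\Cn{16}$, and since only one example is needed, I would search for a rational point on the $\Cn3$-symmetric product $X_1(16)^{(\Cn3)}$, or equivalently a rational fiber of some degree-$3$ function $f\colon X_1(16)\to\PP^1$ (guaranteed to exist by \Cref{cor:inverse_image}) whose corresponding polynomial splits with Galois group $\Cn3$ rather than $S_3$; by \Cref{lem:16_1} there are only finitely many such $f$ up to $\Aut(X_1(16))$, so a finite search among low-height rational values of $f$ suffices.

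For the non-realization of $\Cn{20}$, I would apply the strategy of Section 2 to $X=X_1(20)$. By \Cref{cor:inverse_image} every degree-$3$ point on $X_1(20)$ arises as a fiber of some $f\colon X_1(20)\to\PP^1$ of degree $3$, and by \Cref{lem:20_1} there are only finitely many such functions. For each of them the Galois group of $\Q(f)\subset\Q(X_1(20))$ is $S_3$, so writing $\Q(X_1(20))\cong\Q(f)[t]/g$, a cubic point $s\in f^{-1}(t)$ generates a cyclic cubic field over $\Q$ precisely when $\Delta(g)(t)$ is a rational square, i.e.\ when $t$ lifts to a rational point on the hyperelliptic cover $C_{A_3}\colon w^2=\Delta(g)(f)$ described at the end of Section 2. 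I would compute explicit models of each $C_{A_3}$, determine all of its rational points, and check that every such point corresponds either to a cusp, a CM point, or otherwise to an object not carrying full $\Cn{20}$-torsion.

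The main obstacle is the rational-point determination on these $C_{A_3}$. If any of the curves $C_{A_3}$ has genus $\geq 2$ (or genus $1$ with positive Mordell–Weil rank), a direct search is insufficient and one must invoke Chabauty–Coleman or Mordell–Weil-sieve techniques, which is exactly the purpose of the Magma package \cite{dv} mentioned in the introduction. Once the finite list of rational points on each $C_{A_3}$ is in hand, the remaining verification that none yields an elliptic curve with $\Cn{20}$-torsion over a cyclic cubic field is an explicit, but entirely mechanical, check using standard models of $X_1(20)$.
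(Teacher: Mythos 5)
Your proposal is correct and follows essentially the same route as the paper: reduce via \Cref{thm_dehmz}, \Cref{thm:jeon} and \Cref{rem1} to the two curves $X_1(16)$ and $X_1(20)$, use \Cref{cor:inverse_image} together with the finite lists of degree-$3$ functions, and detect cyclicity of the residue field via rational points on the discriminant double covers $y^2=\Delta(f)$, determined by Chabauty--Coleman and Mordell--Weil sieving. The only (harmless) divergence is that for $\Cn{16}$ you propose a finite search for a single witness, whereas the paper determines all rational points on the four covers $C_i(16)$ and thereby pins down the unique curve with $\Cn{16}$-torsion over a cyclic cubic field.
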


From \Cref{thm:jeon}, it follows that all of the groups in the list apart from $\Cn {16}$ and $\Cn {21}$ appear. The group $\Cn {21}$ appears by \Cref{rem1}. Thus, by \Cref{thm_dehmz}, it remains to show that $\Cn {16}$ appears (finitely many times) and $\Cn {20}$ does not. To do this we have to show that $X_1(20)$ has no non-cuspidal cubic points over any cyclic cubic field and find all such points on $X_1(16)$.

\begin{definition}
	Let $C$ be a curve over a perfect field $K$ then the {\it new part} of $C^{(d)}(K)$ is defined to be
	$$C^{(d)}(K)^{new} := C^{(d)}(K) \setminus \bigcup_{i=1}^{d-1}\iota_i(C^{(i)}(K) \times C^{(d-i)}(K)).$$ Where $\iota_i : C^{(i)} \times C^{(d-i)} \to C^{(d)}$ is the map sending effective divisors of degree $i$ and $d-i$ to their sum.
\end{definition}
\begin{remark}If $K$ is algebraically closed then $C^{(d)}(K)^{new} = \emptyset$ for all $d>1$ because each $\iota_i$ is finite surjective and hence surjective on $K = \overline K$ points.
\end{remark}
\begin{remark}
	If one thinks about $C^{(d)}(K)$ as effective divisors of degree $D$ on $C(\overline K)$ that are stable under the action of Galois. Then $C^{(d)}(K)^{new}$ are exactly those divisors $D = \sum_{i=1}^n m_iP_i$ of degree $d$ where all the multiplicities $m_i$ of the points are $1$ and the action of $\Gal(K)$ is transitive on the points $P_1, \ldots, P_n$. In particular  $C^{(d)}(K)^{new}$ consists exactly of those points in $C^{(d)}(K)$ of the form $x^{(d)}$ where $x \in C(\overline K)$ is a point whose field of definition is of degree $d$ over $K$.
\end{remark}
\begin{lemma}\label{lem:surjection_implies_higher_surjections} Let $C$ be a curve over a field $K$ and $Z \subset C$ a closed subscheme with $Z(K) \neq \emptyset$ and suppose that $d$ is an integer such that $\mu_d : Z^{(d)}(K) \to \Pic^d(C)(K)$ given by $D \mapsto \mathcal O_C(D)$ is surjective, then $\mu_e : Z^{(e)}(K) \to \Pic^e(C)(K)$ is surjective for all integers $e>d$ as well.
\end{lemma}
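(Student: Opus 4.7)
The plan is to reduce the surjectivity of $\mu_e$ to the surjectivity of $\mu_d$ by shifting along a fixed $K$-rational point. Let $P \in Z(K)$ be the given $K$-point, whose existence we use crucially.

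Given an arbitrary class $[\mathcal L] \in \Pic^e(C)(K)$, I would form the line bundle $\mathcal L' := \mathcal L \otimes \mathcal O_C(-(e-d)P)$. Since $P$ is $K$-rational and $\mathcal L$ is $K$-rational, so is $\mathcal L'$, and it has degree $e - (e-d) = d$. By the hypothesis that $\mu_d$ is surjective, there exists $D' \in Z^{(d)}(K)$ with $\mathcal O_C(D') \cong \mathcal L'$. Now set
\[
D := D' + (e-d)\,P,
\]
where addition is taken via the sum map $\iota_{e-d} : Z^{(d)} \times Z^{(e-d)} \to Z^{(e)}$ applied to the pair $(D',(e-d)P)$. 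Since both $D'$ and $(e-d)P$ are $K$-points of the respective symmetric products, $D \in Z^{(e)}(K)$, and
\[
\mathcal O_C(D) \cong \mathcal O_C(D') \otimes \mathcal O_C((e-d)P) \cong \mathcal L' \otimes \mathcal O_C((e-d)P) \cong \mathcal L,
\]
so $\mu_e(D) = [\mathcal L]$, proving surjectivity.

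There is really no serious obstacle here; the only thing worth being careful about is that the construction genuinely lands in $Z^{(e)}(K)$ and not merely in the set of Galois-stable effective degree-$e$ divisors on $C$ supported on $Z$. This is immediate because we are taking the sum of $K$-valued points of $Z^{(d)}$ and $Z^{(e-d)}$ under a $K$-morphism, so nothing essential depends on whether the symmetric product represents divisors functorially in the most restrictive sense. The proof iterates trivially, so one only needs the case $e = d+1$ in principle, but writing it for general $e>d$ is no harder.
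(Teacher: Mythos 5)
Your proof is correct and is essentially identical to the paper's: both arguments twist a given degree-$e$ class down to degree $d$ by $-(e-d)P$ for a fixed $P \in Z(K)$, apply the surjectivity of $\mu_d$, and then add $(e-d)P$ back to the resulting divisor. No substantive difference.
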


\begin{proof}
	Let $x \in Z(K)$ be a point and define $\phi_x : \Pic^d(C) \to \Pic^e(C)$ be the map given by $\phi_x(\mathcal L) = \mathcal L((e-d)x)$. Then $\phi_x$ is an isomorphism and hence any $\mathcal \in \Pic^e(C)(K)$ can be written as $\mathcal L'((e-d)x)$ with $\mathcal L' \in \Pic^d(C)(K)$, by the surjectiveness assumption on $\mu_d$ we can even take $\mathcal L' = \mathcal O_C(D)$ with $D\in Z^{(d)}(K)$. This gives  $$L = L'((e-d)x) = O_C((e-d)x+D)= \mu_e((e-d)x+D)$$  where $(e-d)x+D$ is in the domain of $\mu_e$ because $e - d>0$. It follows that $\mu_e$ is surjective.
\end{proof}

\begin{lemma}\label{lem:surject_implies_function}
	Let $C$ be a curve over a field $K$ with $C(K) \neq \emptyset$, $d$ is an integer and $S \subset C^{(d)}(K)$ a subset such that $\mu_{\mid S} : S \to \Pic^d(C)(K)$ given by $D \mapsto \mathcal O_C(D)$ is surjective, then every point in $C^{(d)}(K)^{new} \setminus S$ is of the form $f^*(x)$ where $f \in K(C)$ is a function of degree $d$, $x \in \PP^1(K)$ and $f^*(x)$ denotes the pullback of $x$ along $f$ as divisor.
\end{lemma}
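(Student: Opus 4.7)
The plan is as follows. Start with a point $D \in C^{(d)}(K)^{new} \setminus S$. Using the surjectivity hypothesis on $\mu_{\mid S}$, pick $D' \in S$ with $\mathcal O_C(D') \cong \mathcal O_C(D)$, so that $D$ and $D'$ are linearly equivalent over $K$. Since $\dv(f) = D - D'$ is already a Galois-stable divisor, a standard Hilbert 90 argument applied to the cocycle $\sigma \mapsto \sigma(f)/f \in \overline K^*$ lets us choose a representative $f \in K(C)^*$.

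The crucial step will be to show that $D$ and $D'$ have disjoint support in $C(\overline K)$. By the description of $C^{(d)}(K)^{new}$ noted in the remark preceding the lemma, $D = x_1 + \cdots + x_d$ is a single Galois orbit of $d$ distinct geometric points. Suppose, toward a contradiction, that some $x_i$ lies in the support of $D'$. Since $D'$ is Galois-stable, each $x_j$ must then appear in $D'$ with the same multiplicity, which is at least $1$; hence $D' \geq D$ as effective divisors. Comparing degrees, $\deg D' = d = \deg D$ forces $D' = D$, contradicting $D \notin S$ while $D' \in S$.

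With disjointness of supports established, $\dv(f) = D - D'$ is already the decomposition of $f$ into its zero and pole parts: $f^*(0) = D$ and $f^*(\infty) = D'$. In particular $f \colon C \to \PP^1$ is a morphism of degree $\deg f^*(0) = d$, and setting $x = 0 \in \PP^1(K)$ gives $D = f^*(x)$, as required. The only genuine obstacle in this argument is the disjointness of supports; without the \emph{new} hypothesis, a partial cancellation between the supports of $D$ and $D'$ could produce an $f$ of degree strictly smaller than $d$, and the conclusion would fail.
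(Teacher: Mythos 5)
Your proposal is correct and follows essentially the same route as the paper: choose $D'\in S$ linearly equivalent to $D$, take $f$ with $\dv(f)=D-D'$, and use the \emph{new} hypothesis to force disjoint supports and hence $\deg f = d$ with $D=f^*(0)$. You simply spell out two details the paper leaves implicit (the Galois-orbit argument for disjointness and the Hilbert 90 descent of $f$ to $K(C)$), which is fine.
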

\begin{proof}
	Let $D \in C^{(d)}(K)^{new} \setminus S$ then by the surjectiveness assumption there is an $D' \in S$ such that $\mu(D') = \mu(D)$. Let $f$ be a function such that $\Div f = D-D'$, then by definition one has  $D \neq D'$, and because $D \in C^{(d)}(K)^{new}$ one even has that the supports of $D$ and $D'$ have to be disjoint. In particular $f$ has to have degree exactly $d$, and $D = f^*(0)$ by construction.
\end{proof}

\begin{proposition}\label{prop:cusps_surject}
	Let $C_1(N) := X_1(N) \setminus Y_1(N)$ be the closed subscheme consisting of the cusps. Then the maps $\mu_3: C_1(16)^{(3)}(\Q) \to \Pic^3 X_1(16)(\Q)$ and $\mu_3: C_1(20)^{(3)}(\Q) \to \Pic^3 X_1(20)(\Q)$ are surjective.
\end{proposition}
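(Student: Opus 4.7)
The strategy is to reduce surjectivity of $\mu_3$ to a finite, explicit verification. For $N \in \{16, 20\}$, $X_1(N)$ has positive genus ($2$ and $3$ respectively), and $J_1(N)(\Q)$ is finite; consequently $\Pic^3 X_1(N)(\Q)$ is a finite torsor over $J_1(N)(\Q)$. On the other side, the set of effective degree-$3$ divisors supported on the rational cusps $C_1(N)(\Q)$ is finite and completely explicit, so $\mu_3$ is a map between two finite sets.

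First I would list the rational cusps of $X_1(N)$ for $N = 16, 20$ using the standard description of cusps as $\Gamma_1(N)$-equivalence classes of primitive pairs $(a, c) \in (\Z/N\Z)^2$, together with the $\Gal(\Q(\zeta_N)/\Q)$-action given by $\sigma_u \colon (a, c) \mapsto (ua, c)$. In each case one obtains several rational cusps, certainly enough to build a rich collection of effective degree-$3$ divisors on $C_1(N)(\Q)$.

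Next I would invoke the Manin--Drinfeld theorem to identify the subgroup $\mathcal C \subset J_1(N)(\Q)$ generated by degree-$0$ divisors with rational cuspidal support. For $N \in \{16, 20\}$ the equality $\mathcal C = J_1(N)(\Q)$ is known and can be checked computationally (via an explicit rank-$0$ computation and a comparison of torsion orders). Once this is in hand, every class in $\Pic^3 X_1(N)(\Q)$ can be written as $[D_0] + 3[c_0]$ with $D_0$ a degree-$0$ divisor supported on $C_1(N)(\Q)$ and $c_0$ a fixed rational cusp, so every class has at least one (a priori possibly non-effective) rational-cuspidal representative of degree $3$.

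The main obstacle is the final effectiveness step: such a natural representative may carry a negative multiplicity at some rational cusp, and one must produce an effective replacement supported on $C_1(N)(\Q)$. The cleanest route is computational. Using an explicit model of $X_1(N)$ and its Jacobian in Magma, one enumerates the finitely many effective degree-$3$ divisors on $C_1(N)(\Q)$, computes their images in the finite group $\Pic^3 X_1(N)(\Q)$, and verifies that this map is surjective for both $N = 16$ and $N = 20$. This finite verification, together with the structural reductions above, is the content of the proposition; by \Cref{lem:surjection_implies_higher_surjections} it would even suffice to establish the analogous statement at any $d \leq 3$, although working at $d = 3$ directly matches the downstream application.
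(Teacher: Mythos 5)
Your overall strategy---show $J_1(N)(\Q)$ is finite and equal to its cuspidal subgroup, then enumerate the finitely many degree-$3$ effective cuspidal divisors and verify computationally that their images cover $\Pic^3 X_1(N)(\Q)$---is essentially the paper's argument for $N=20$ (for $N=16$ the paper instead shortcuts via the known surjectivity of $\mu_2$ from \cite[Lemma 4.8]{bbdn} combined with \Cref{lem:surjection_implies_higher_surjections}, a route you mention only in passing at the end). However, there is one concrete gap: throughout you restrict to effective divisors \emph{supported on the rational cusps} $C_1(N)(\Q)$, whereas the domain of $\mu_3$ is $C_1(N)^{(3)}(\Q)$, which also contains Galois-stable divisors of the form $P+Q+Q'$ with $P$ a rational cusp and $\{Q,Q'\}$ a conjugate pair of quadratic cusps. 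For $N=20$ this restriction is fatal to your final verification step: there are only $\binom{6+3-1}{3}=56$ effective degree-$3$ divisors supported on the six rational cusps, while $\#\Pic^3 X_1(20)(\Q)=\#J_1(20)(\Q)=60$, so the restricted map cannot possibly be surjective for cardinality reasons alone. The actual computation must run over all $56+6\cdot 3=74$ elements of $C_1(20)^{(3)}(\Q)$, and it then finds exactly $60$ nonempty fibres; in particular some classes are represented \emph{only} by divisors involving a quadratic pair of cusps. (For $N=16$ your restriction happens to be harmless, since there every mixed divisor is linearly equivalent to one supported on the rational cusps, but that is a fact one discovers only after doing the larger enumeration.)

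The same imprecision affects your intermediate step: the subgroup of $J_1(N)(\Q)$ to compare with the full Mordell--Weil group is the one generated by all $\Q$-rational degree-$0$ divisors supported on the cusps (i.e.\ Galois-stable cuspidal divisors, including the quadratic orbits), not only those supported on the rational cusps. Once the enumeration is enlarged to all of $C_1(N)^{(3)}(\Q)$, the rest of your plan goes through and coincides with the paper's proof; note also that Manin--Drinfeld is not really needed here, since the finiteness of $J_1(N)(\Q)$ already follows from a rank-$0$ computation together with a torsion bound such as $J_1(20)(\F_3)\cong\Z/60\Z$.
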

\begin{proof}
	For $X_1(16)$ this follows from \cref{lem:surjection_implies_higher_surjections} because according to \cite[Lemma 4.8.]{bbdn} the conditions of  \cref{lem:surjection_implies_higher_surjections} are satisfied for $d=2$.
	
	For $X_1(20)$ this is slightly more work. To determine $J_1(20)(\Q)$, an easy computation in Magma shows that $J_1(20)(\F_3)\simeq \Z/ 60 \Z$, and the subgroup of $J_1(20)(\Q)$ generated by $\Q$-rational divisors supported on the cusps is isomorphic to $\Z/60\Z$, showing that $J_1(20)(\Q)$ and hence $\Pic^3 X_1(20)(\Q)$ has cardinality 60.

So it suffices to compute the cardinality of the image of $\mu_3 :C_1(20)^{(3)}(\Q) \to \Pic^3 X_1(20)(\Q)$. The curve $X_1(20)$ has 6 rational cusps, 3 pairs of Galois conjugate cusps over quadratic extensions and no cusps with a cubic field of definition, hence the cardinality of $C_1(20)^{(3)}(\Q)$ is $74 = 56+18$ with a contribution of $\binom{6+3-1}{3}=56$ coming from triples of rational cusps and a contribution of $6 \cdot 3=18$ coming from a rational cusp together with a pair of Galois conjugate cusps. Computing the map $\mu_3 :C_1(20)^{(3)}(\Q) \to\Pic^3 X_1(20)(\Q)$ can be done using modular symbols as in \cite[Section 4]{dkm}, this shows that $\mu_3$ has 54 fibers containing exactly one element, 2 fibers with 4 elements (these two fibers are swapped by the action of the diamond operators) and 4 fibers containing 3 elements (these 4 fibers are again permuted by the diamond operators), this gives a total of 60 nonempty fibers and hence $\mu_3$ is surjective. 
\end{proof}

Combining \cref{lem:surject_implies_function,prop:cusps_surject} one gets the following corollary:
\begin{corollary}
\label{cor:inverse_image}
	Let $N=16$ or $20$, then any point of degree $3$ over $\Q$ on $Y_1(N)$ occurs in an inverse image of the form $f^{-1}(t)$, with $f \in \Q(X_1(N))$ a function of degree $3$ and $t \in \PP^1(\Q)$.
\end{corollary}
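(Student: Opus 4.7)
The plan is to apply \Cref{lem:surject_implies_function} and \Cref{prop:cusps_surject} directly, with essentially no further work. Let $P \in Y_1(N)$ be a point of degree $3$ over $\Q$, with field of definition a cubic field $L$, and form its Galois orbit $D := \sum_{\sigma} \sigma(P)$, viewed as an effective degree $3$ divisor on $X_1(N)$ defined over $\Q$. By the remark following the definition of the new part, $D \in X_1(N)^{(3)}(\Q)^{new}$, since $P$ has field of definition of degree exactly $3$.

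Next I would take $S := C_1(N)^{(3)}(\Q)$ as a subset of $X_1(N)^{(3)}(\Q)$. Because $P$ lies on $Y_1(N)$, the support of $D$ is disjoint from the cuspidal locus $C_1(N)$, so $D \notin S$. Moreover $X_1(N)(\Q) \neq \emptyset$ (both $X_1(16)$ and $X_1(20)$ carry rational cusps), so the running hypothesis of \Cref{lem:surject_implies_function} is met, and by \Cref{prop:cusps_surject} the map $\mu_{3\mid S} : S \to \Pic^3 X_1(N)(\Q)$ is surjective. Applying \Cref{lem:surject_implies_function} with $C = X_1(N)$, $K = \Q$, $d = 3$, and this $S$, I obtain a function $f \in \Q(X_1(N))$ of degree $3$ and a point $t \in \PP^1(\Q)$ such that $D = f^{*}(t)$. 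Since $P$ lies in the support of $f^{*}(t)$, this gives $P \in f^{-1}(t)$, which is exactly the claim.

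There is no real obstacle here: the substantive input is already packaged in the two cited results, and this corollary is just a two-line deduction assembling them. The only thing to be careful about is to identify $D$ as an element of the \emph{new} part of $X_1(N)^{(3)}(\Q)$ so that \Cref{lem:surject_implies_function} actually applies, and to observe that the non-cuspidality of $P$ is precisely what prevents $D$ from lying in the cuspidal subset $S$ over which surjectivity of $\mu_3$ has been established.
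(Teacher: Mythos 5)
Your proof is correct and is exactly the paper's argument: the paper derives the corollary by "combining" \Cref{lem:surject_implies_function} and \Cref{prop:cusps_surject} with $S = C_1(N)^{(3)}(\Q)$, and you have simply written out the routine verifications (the trace divisor lies in the new part, non-cuspidality keeps it out of $S$, and $X_1(N)$ has a rational cusp) that the paper leaves implicit.
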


The above corollary says that in order to explicitly describe all degree 3 points on either $X_1(N)$ for $N=16$ or 20 one just needs to find all $\Q$-rational functions of degree $3$ on these curves. However  \Cref{prop:cusps_surject} also gives a hint on how to find all these functions explicitly, indeed it says that every degree $3$ function is (up to an automorphism of $\PP^1$) a pole supported at the cusps. This means that \Cref{lem:16_1,lem:20_1} can now easily be proved by computing which elements $C \in C_1(N)^{(3)}(\Q)$ are the pole divisor of a function of degree 3. And then grouping these elements together by linear equivalence and taking one of them under the action of the diamond operators. What follows is a description of what happens for $X_1(16)$ and $X_1(20)$ when doing this computation.

\begin{lemma}
\label{lem:16_1}
There are $4$ (up to the action of the diamond operators on $X_1(16)$ and automorphism of $\PP^1$) maps $X_1(16)\rightarrow \PP^1$ of degree $3$.
\end{lemma}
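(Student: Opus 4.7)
The plan is to enumerate all degree three functions $f \in \Q(X_1(16))$ directly, following the strategy outlined just before the statement. Since $X_1(16)$ has genus $g = 2$, Riemann--Roch implies that every effective degree three divisor $D$ satisfies $h^0(D) = 2$, since $\deg(K - D) = -1 < 0$. The linear system $|D|$ is therefore always two-dimensional, and the question is whether it is base-point-free (in which case it produces a degree three morphism to $\PP^1$) or has a base point $P$ (in which case $|D - P|$ is a $g^1_2$, hence equals $|K|$ by Clifford, forcing $D \sim K + P$, and no degree three map arises). Two base-point-free systems $|D|$ and $|D'|$ give the same map to $\PP^1$ up to $\Aut(\PP^1)$ precisely when $[D] = [D']$ in $\Pic^3 X_1(16)$.

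By \Cref{prop:cusps_surject} combined with \Cref{lem:surject_implies_function}, every rational class in $\Pic^3 X_1(16)(\Q)$ has a representative among the cuspidal divisors in $C_1(16)^{(3)}(\Q)$. Thus the first step is to compute $J_1(16)(\Q) \cong \Pic^3 X_1(16)(\Q)$ and pair each rational class with a cuspidal representative. Concretely in Magma this amounts to: enumerating the cusps of $X_1(16)$ together with their fields of definition; listing the effective degree three divisors in $C_1(16)^{(3)}(\Q)$; computing their images under $\mu_3$ using modular symbols as in \cite[Section 4]{dkm}; and grouping by linear equivalence.

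With each rational class $[D]$ in hand, one tests base-point-freeness. On a genus two curve this reduces to checking whether $[D]$ lies on the image of the closed immersion $X_1(16) \hookrightarrow \Pic^3 X_1(16)$ sending $P \mapsto [K + P]$; the rational points of this image come precisely from rational points of $X_1(16)$, which is a finite explicit list of (cuspidal and, via the classification of small torsion, non-cuspidal) points. Equivalently one can compute $L(D)$ explicitly and read off the common zeros of a basis. The classes that survive this test are exactly the base-point-free $\mathbb{Q}$-rational $g^1_3$'s, and each of them yields a degree three function, unique up to $\Aut(\PP^1)$.

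Finally one quotients the resulting set of linear equivalence classes by the induced action of the diamond operators on $\Pic^3 X_1(16)$. The main obstacle is not conceptual but rather the Magma bookkeeping: correctly presenting $J_1(16)(\Q)$ from modular symbols, identifying the cuspidal representatives, excluding the finitely many classes of the form $[K + P]$, and tracking the induced permutation of the remaining classes under the diamond operator group. Carrying out this computation yields exactly four orbits, proving the lemma.
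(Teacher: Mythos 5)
Your proposal is correct and follows the same skeleton as the paper's proof: reduce to cuspidal divisors via \Cref{prop:cusps_surject} and \Cref{lem:surject_implies_function}, sort the divisors in $C_1(16)^{(3)}(\Q)$ into linear equivalence classes in $\Pic^3 X_1(16)(\Q)$ computed from modular symbols, and then take diamond orbits. The one genuine difference is how you decide which classes yield degree-$3$ maps: the paper checks by brute force which of the $56+12$ effective cuspidal divisors occur as the pole divisor of a degree-$3$ function (finding $32$ of them, falling into $14$ classes), whereas you use that $X_1(16)$ has genus $2$, so Riemann--Roch forces $h^0(D)=2$ for every degree-$3$ class and the only obstruction is a base point, which happens exactly for $[D]=[K+P]$ with $P\in X_1(16)(\Q)$. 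Since $\#\Pic^3 X_1(16)(\Q)=\#J_1(16)(\Q)=20$ and $X_1(16)(\Q)$ consists of the $6$ rational cusps, your criterion gives $20-6=14$ base-point-free classes at once, which is a clean cross-check of the paper's count of $14$ linear equivalence classes before passing to the $4$ diamond orbits. The only thing your route omits is the explicit equations for the four functions, which the paper extracts from the same computation and needs later in the proof of \Cref{lem:rat_16}; but they are not required for the statement of the lemma itself.
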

\begin{proof}

The modular curve $X_1(16)$ has an affine model (see \cite{sut})
$$X_1(16):y^2 =x(x^2+1)(x^2+2x-1).$$ We can identify $C_1(16)(\C)$ with $\Gamma_1(16) \backslash \PP^1(\Q)$ an explicit set of representatives of this is given by $$\{0, 1/8, 1/7, 1/6, 1/5, 1/4, 1/3,1/2, 3/4, 3/8, 3/16, 5/16,  7/16,  \infty\}.$$ Under the action of Galois this splits up into $9$ orbits, namely:
$$ \{0, 1/7, 1/5, 1/3\},  \{1/6, 1/2\},\{1/4, 3/4\}, \{1/8\}, \{3/8\},\{3/16\},  \{5/16\}, \{7/16\}, \{\infty\}.$$
This means that $C_1(16)^{(3)}(\Q)$ consists of $\binom{6+3-1}{3} = 56$ divisors supported at the $6$ rational cusps and $6 \cdot 2$ divisors of the form $P_1 + P_2$ where $P_1$ is one of the $6$ rational cusps and $P_2$ is the sum of the points in either of the two pairs of Galois conjugate cusps. By explicit computation one checks that each of the 12 effective divisors of degree 3 not supported at the rational cusps is linearly equivalent to a divisor supported on the rational cusps, so that up to automorphisms of $\PP^1$ every function of degree $3$ has a pole divisor supported on the rational cusps. It turns out that of the 56 effective divisors of degree 3 supported on the rational cusps, there are exactly 32 that are the pole of a degree $3$ function. These 32 divisors are divided into 10 linear equivalence classes of size 2 and 4 linear equivalence classes of size 3, for a total of 14 linear equivalence classes, hence up to the automorphisms of $\PP^1$ there are exactly 10+4 = 14 functions of degree $3$ on $X_1(16)$. The 10 linear equivalence classes of size $2$ form one orbit of size 2 and two orbits of size 4 under the diamond operators, while the 4 linear equivalence classes of size $3$ are one diamond orbit. So that the $14$ functions form $4$ distinct orbits under the action of the diamond operators. Finally the explicit equations for these functions where obtained by finding an $f$ whose divisor is the difference between divisors in the same linear equivalence class. 
Given a degree 3 function $g_i$,
$$g_i\in\Q(x)[y]/(y^2 -x(x^2+1)(x^2+2x-1))\simeq \Q(X_1(16))$$
one can compute its minimal polynomial over $\Q(x)$ as an element $f_i\in \Q(x)[t]$. If the degree of $f_i$ in $t$ is $[\Q(X_1(16)):\Q(x)]=2$ then $t \mapsto g_i$ gives an isomorphism $\Q(x)[t]/f_i \to \Q(X_1(16))$.

The explicit equations for the 4 maps $X_1(16)\rightarrow \PP^1$ of degree $3$ that we obtain are:
\begin{align}
\label{mp:16_start}
g_1(x,y)&=(x^3 + x^2 + 3x - 1+2y)/(2x^3 - 2x^2 - 2x
+ 2),\\
\label{maps:16_1}
f_1(x,t)&=(4x^3 - 4x^2 - 4x + 4)y^2 + (-4x^3 - 4x^2 - 12x + 4)y + x^3 - x^2 - x + 1,\\
\label{mp:16_2}
g_2(x,y)&=\frac{y+2x^2}{x^2 - 1},\\
\label{maps:16_2}
f_2(x,t)&=(x^2 - 1)t^2 - 4x^2t - x^3 + 2x^2 - x,\\
\label{mp:16_3}
g_3(x,y)&= \frac{y-2x^2}{x^3 - x},\\
\label{maps:16_3}
f_3(x,t)&= (x^3 - x)t^2 + 4x^2t - x^2 + 2x - 1,\\
\label{mp:16_end}
g_4(x,y)&=\frac{2x^2+y}{x^2 - x},\\
\label{maps:16_4}
f_4(x,t)&=(x^2 - x)t^2 - 4x^2t - x^3 + x^2 + x - 1.
\end{align}
\end{proof}

\begin{remark}
Notice that from the computations in the proof of \Cref{lem:16_1} one can also see that the image of $\mu_3: C_1(16)^{(3)}(\Q) \to \Pic^3 X_1(16)(\Q)$ has 20 elements and hence is surjective.
\end{remark}

\begin{lemma}
\label{lem:20_1}
There are $2$ (up to the action of the diamond operators on $X_1(20)$ and automorphism of $\PP^1$) maps $X_1(20)\rightarrow \PP^1$ of degree $3$.
\end{lemma}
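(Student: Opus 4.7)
The plan is to follow the exact strategy that worked for $X_1(16)$ in \Cref{lem:16_1}, since \Cref{prop:cusps_surject} supplies surjectivity of $\mu_3$ for $X_1(20)$ as well. By \Cref{lem:surject_implies_function} applied with $S = C_1(20)^{(3)}(\Q)$, every point of $X_1(20)^{(3)}(\Q)^{new}$ not already a cuspidal divisor is of the form $f^*(t)$ for some degree $3$ function $f \in \Q(X_1(20))$ with $t \in \PP^1(\Q)$. More importantly for us, the same surjectivity argument shows that every degree $3$ function on $X_1(20)$ has, after post-composing with a suitable automorphism of $\PP^1$, a pole divisor supported on the rational cusps. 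Thus the problem reduces to a finite search among cuspidal divisors of degree $3$.

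First I would fix an explicit affine model for $X_1(20)$ (for instance the one from \cite{sut}) and identify the $6$ rational cusps together with the $3$ pairs of Galois-conjugate quadratic cusps, using the standard description of $C_1(20)(\C)$ as $\Gamma_1(20)\backslash\PP^1(\Q)$. From this one recovers $|C_1(20)^{(3)}(\Q)| = 74$ as in the proof of \Cref{prop:cusps_surject}, with the $56 = \binom{6+3-1}{3}$ divisors supported on rational cusps and the $6\cdot 3 = 18$ divisors obtained by adding a rational cusp to one of the conjugate pairs. Since \Cref{prop:cusps_surject} forces every linear equivalence class in $\Pic^3 X_1(20)(\Q)$ to contain a divisor supported entirely on the rational cusps, it suffices to work within the $56$ purely rational-cuspidal divisors.

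Next, for each such divisor $D$ I would compute $h^0(\mathcal{O}(D))$ by evaluating Riemann--Roch in Magma using the modular-symbols model, and retain only those $D$ for which $h^0(\mathcal{O}(D)) \ge 2$, i.e.\ those that actually arise as the pole divisor of a degree-$3$ function. Grouping the surviving divisors by linear equivalence gives the complete list of degree $3$ functions up to $\mathrm{Aut}(\PP^1)$; then quotienting this list by the action of the diamond operators $\langle d\rangle$ yields the count of orbits, which should equal $2$ as asserted. The diamond action is read off directly from its action on the cusps (a known combinatorial recipe).

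Finally, for each of the two orbits I would extract an explicit representative function in the form $g_i \in \Q(x)[y]/(y^2 - \ldots)\simeq \Q(X_1(20))$ by finding a function whose divisor is the difference of two linearly equivalent cuspidal divisors in the same class, and then compute the minimal polynomial $f_i(x,t) \in \Q(x)[t]$ of $g_i$ over $\Q(x)$, exactly as in \Cref{lem:16_1}. The main obstacle I anticipate is not conceptual but computational bookkeeping: correctly identifying the $q$-expansions (or divisor classes) of all $74$ cuspidal triples, implementing the diamond action accurately on the resulting linear-equivalence classes, and then producing clean explicit equations for the two orbit representatives. Once the $X_1(20)$ Riemann--Roch and cuspidal-divisor data are in hand, however, the enumeration is entirely mechanical and parallels the $X_1(16)$ argument.
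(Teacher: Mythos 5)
Your overall strategy is the same as the paper's: use \Cref{prop:cusps_surject} to reduce to pole divisors supported on the cusps, enumerate the degree-$3$ cuspidal divisors, select those with $h^0 \ge 2$, group by linear equivalence, and quotient by the diamond action. However, there is one concrete logical gap. You assert that \Cref{prop:cusps_surject} ``forces every linear equivalence class in $\Pic^3 X_1(20)(\Q)$ to contain a divisor supported entirely on the rational cusps,'' and on that basis you restrict the $h^0$ search to the $56$ rational-cuspidal divisors. Surjectivity of $\mu_3$ only guarantees a representative supported on the \emph{cusps}, which for $X_1(20)$ includes the $18$ mixed divisors of the form (rational cusp) $+$ (quadratic conjugate pair). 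In fact your claim is false: the paper's fiber count ($54$ singleton fibers, $2$ fibers of size $4$, $4$ of size $3$, totalling $74 = 56 + 18$ divisors) forces $12$ of the $60$ classes to be represented \emph{only} by mixed divisors. The enumeration as you describe it could therefore in principle miss a $g^1_3$ whose class has no rational-cuspidal representative. The paper closes this hole by explicitly checking the $18$ mixed divisors: $12$ are not pole divisors of degree-$3$ functions at all, and the remaining $6$ are linearly equivalent to rational-cuspidal divisors. You need to add that check (or simply run the $h^0$ and linear-equivalence computation over all $74$ cuspidal divisors rather than just the $56$).

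Two smaller remarks. First, you write the function field as $\Q(x)[y]/(y^2 - \ldots)$; the model of $X_1(20)$ used in the paper is $y^3 = x^3y^2 + x^2y - x$, which is cubic in $y$ (the curve has genus $3$ and is not hyperelliptic), so the minimal polynomials $f_i$ live in a degree-$3$ extension of $\Q(x)$ (or, as the paper does, one works over $\Q(y)$). Second, your criterion ``$h^0(\mathcal O(D)) \ge 2$ iff $D$ is the pole divisor of a degree-$3$ function'' silently uses that $X_1(20)$ is non-hyperelliptic (so that the resulting $g^1_3$ is base-point free); this is true but worth saying. With the mixed-divisor check added, your argument matches the paper's and yields the asserted count of $6$ functions in $2$ diamond orbits.
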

\begin{proof}
The modular curve $X_1(20)$ has an affine model (see \cite{sut})
$$X_1(20):y^3= x^3y^2 + x^2y - x.$$

We can identify $C_1(20)(\C)$ with $\Gamma_1(20) \backslash \PP^1(\Q)$ an explicit set of representatives of this is given by $$\{0, 1/10, 1/9, 1/8, 1/7, 1/6, 1/5, 1/4, 1/3, 1/2, 2/5, 3/4, 3/5, 3/8, 3/10, 3/20, 4/5,  7/20, 9/20, \infty \}.$$ Under the action of Galois this splits up into $9$ orbits, namely:
\begin{align*}&\{0, 1/9, 1/7, 1/3\}, \{1/8, 1/4, 3/8, 3/4\}, \\
&\{1/6, 1/2\}, \{1/5, 4/5\}, \{2/5, 3/5\},\\
&\{1/10\}, \{3/10\}, \{3/20\},\{7/20\},\{9/20\},\{\infty\}.\end{align*}

This means that $C_1(20)^{(3)}(\Q)$ consists of $\binom{6+3-1}{3} = 56$ divisors supported at the $6$ rational cusps and $6 \cdot 3$ divisors of the form $P_1 + P_2$ where $P_1$ is one of the $6$ rational cusps and $P_2$ is the sum of the points in either of the three pairs of Galois conjugate cusps. By explicit computation one checks that 12 of the 18 effective divisors of degree 3 not supported at the rational cusps do not occur as the pole of a function of degree $3$ and that the other 6 are linearly equivalent to a divisor supported on the rational cusps, so that up to automorphisms of $\PP^1$ every function of degree $3$ has a pole divisor supported on the rational cusps. It turns out that of the 56 effective divisors of degree 3 supported on the rational cusps, there are exactly 14 that are the pole of a degree $3$ function. These 14 divisors are divided into 4 linear equivalence classes of size 2 and 2 linear equivalence classes of size 3, for a total of 6 linear equivalence classes, hence up to the automorphisms of $\PP^1$ there are exactly 6 functions of degree $3$ on $X_1(20)$. The 4 linear equivalence classes of size $2$ form one orbit under the diamond operators, as do the 2 linear equivalence classes of size $3$. So that the $6$ functions form only $2$ distinct orbits under the action of the diamond operators. Finally the explicit equations for these functions where obtained by finding an $f$ whose divisor is the difference between divisors in the same linear equivalence class. 

As in the proof of \Cref{lem:16_1}, the degree 3 functions $g_i$ will be written down as elements
$$g_i\in\Q(x)[y]/(y^3 - x^3y^2 -  x^2y + x) \simeq \Q(X_1(20))$$
and $f_i \in \Q(x)[t]$ will denote the minimal polynomial of $g_i$ over $\Q(x)$.

The explicit equations for the 2 maps $X_1(20)\rightarrow \PP^1$ of degree $3$ that we obtain are:
\begin{align}
\label{maps:20}
g_1(x,y)&=y - 1,\\
\label{fi:20_1}
f_1(y,t)&=t - y + 1,\\
\label{maps:20_2}
g_2(x,y)&=\frac{xy+y}{y+1},\\
\label{fi:20_2}
f_2(y,t)&=(y^3 - y)t^3 + (-3y^2 + 2y + 1)t^2 + (3y^2 + 2y - 1)t - y^2 - 2y - 1
.
\end{align}
\end{proof}

\begin{remark}
Notice that from the computations in the proof of \Cref{lem:20_1} one can also see that the image of $\mu_3: C_1(20)^{(3)}(\Q) \to \Pic^3 X_1(20)(\Q)$ has 60 elements and hence is surjective.
\end{remark}

\begin{lemma}
\label{lem:rat_16}
The only elliptic curve $E$ with $\Cn {16}$ torsion over a cyclic cubic field $K$ is
$$y^2 + axy +
by = x^3 +bx^2,$$
where
\begin{equation}a=\frac{-11\alpha^2 + 2543\alpha + 2240}{2232}, b=  \frac{481\alpha^2 - 2465\alpha -
    376}{155682},
\label{exc_curve}
\end{equation}
$\alpha$ is a root of $x^3 - 8x^2 - x + 8/9$ and $K=\Q(\alpha)$.
\end{lemma}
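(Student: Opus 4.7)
The plan is to combine \Cref{cor:inverse_image} with \Cref{lem:16_1}: any non-cuspidal degree-$3$ point on $X_1(16)$ is, up to diamond operators and automorphisms of $\PP^1$, a fiber $g_i^{-1}(t_0)$ of one of the four degree-$3$ functions $g_1,\ldots,g_4\colon X_1(16)\to\PP^1$ with $t_0\in\PP^1(\Q)$. Since the diamond operators act by $\Q$-automorphisms of $X_1(16)$, they preserve both the field of definition of a point and the associated elliptic curve up to isomorphism, so it suffices to determine, for each $i$, those $t_0\in\PP^1(\Q)$ for which $g_i^{-1}(t_0)$ is a Galois orbit of three distinct non-cuspidal points defined over a cyclic cubic field.

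For each $i$ the plan is to view the minimal polynomial $f_i(x,t)$ from \Cref{lem:16_1} as a cubic $h_i(x)\in\Q(t)[x]$ of degree $3$ in $x$. The fiber $g_i^{-1}(t_0)$ is defined over a cyclic cubic field precisely when $h_i(x)|_{t=t_0}$ is irreducible over $\Q$ and its $x$-discriminant $\Delta_i(t_0)$ is a nonzero rational square. By the $A_3$-construction at the end of Section 2 applied to the degree-$3$ cover $g_i$ (the case $H=S_3$, $H'=A_3$), the second condition is equivalent to $t_0$ lifting to a rational point on the hyperelliptic curve
\[ C_i\colon s^2=\Delta_i(t), \]
where $\Delta_i(t)\in\Q[t]$ is explicitly computable in Magma.

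The hard part will be determining $C_i(\Q)$ for each of the four values of $i$. Since the coefficients of $h_i$ have degree at most $2$ in $t$, the polynomial $\Delta_i(t)$ has degree at most $8$, so each $C_i$ has geometric genus at most $3$; I expect (and would verify) that the Mordell--Weil rank of the Jacobian of each $C_i$ is strictly less than its genus, allowing Chabauty's method to be applied through the Magma package of \cite{dv} together with a Mordell--Weil sieve. Once $C_i(\Q)$ is in hand the remaining work is mechanical: discard those $t_0$ for which $h_i(x)|_{t=t_0}$ factors over $\Q$, has a repeated root, or lies in the cuspidal locus of $X_1(16)$, and for each surviving $t_0$ read off the corresponding elliptic curve using the Tate normal form for a $\Cn{16}$-structure. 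Collating the output across all four maps $g_i$ and their diamond-operator orbits should leave a single $\overline{\Q}$-isomorphism class, which must then be shown to coincide with the curve \eqref{exc_curve} over $\Q(\alpha)$ with $\alpha^3-8\alpha^2-\alpha+8/9=0$.
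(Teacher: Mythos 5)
Your proposal follows essentially the same route as the paper: reduce via \Cref{cor:inverse_image} and \Cref{lem:16_1} to the four degree-$3$ maps, impose cyclicity as ``the $x$-discriminant of $f_i(x,t_0)$ is a square,'' and determine the rational points of the resulting hyperelliptic curves $y^2=\Delta_i(t)$ by Chabauty combined with Mordell--Weil sieving. The only differences are implementational: the paper finds that one curve has a rank-$0$ Jacobian (handled via a quotient elliptic curve) and one has genus $2$ (handled by Magma's built-in Chabauty), leaving only two curves requiring the hand-rolled Chabauty--Coleman plus Mordell--Weil sieve you anticipate.
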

\begin{proof}
Since we've proved in \cref{cor:inverse_image} that all cubic points on $X_1(16)$ can be obtained by the action of diamond operators on the inverse images of $\PP^1(\Q)$ under the maps \eqref{mp:16_start}, \eqref{mp:16_2}, \eqref{mp:16_3} and \eqref{mp:16_end}, this allows us to describe all elliptic curves with $\Cn {16}$ torsion over cubic fields as a union of parametric families $E_t/K_t$, where each curve $E_t$ is defined over the cubic field $K_t$.

Thus we have to find all values $t$ such that $\Delta(K_t)$ is a square, since these will correspond to values of $t$ for which $K_t$ is cyclic. This is equivalent to finding all the values $t$ such that the discriminant $\Delta(f_i)$ is a square for $1 \leq i \leq 4$, and where the $f_i$ are given in \eqref{maps:16_1}, \eqref{maps:16_2}, \eqref{maps:16_3} and \eqref{maps:16_4}.

This leads us to consider the following $4$ hyperelliptic curves; each curve $C_i$ is obtained by having $y^2=\Delta(f_i)$.
\begin{align}
C_1 = C_1(16):y^2&=128t^7 - 240t^6 + 112t^5 - 12t^4 + 80t^3 - 88t^2 + 32t - 4,\\
C_2 = C_2(16):y^2&= t^8 - 12t^7 + 54t^6 - 112t^5 + 97t^4 -
    32t^3 + 4t^2 - 4t.\\
C_3 = C_3(16):y^2&=t(t^7 - 2t^5 + 16t^4 - 15t^3 + 36t^2 - 32t + 4),\\
C_4 = C_4(16):y^2&= t^6 - 4t^5 - 4t^4 - 40t^3 + 20t^2 -
    32t,
\end{align}

The curve $C_4$ has genus 2, while the other three curves have genus 3. Using 2-descent for hyperelliptic curves in Magma we get that the Jacobian of the curve $C_1$ has rank 0, while the Jacobians of the remaining 3 curves have rank 1.

Since the Jacobian of the curve $C_1$ has rank 0, it is easy to find all the rational points on $C_1$. We find that $\# \Aut(C_1)=4$, so $C_1$ has automorphisms which are not the identity and not the hyperelliptic involution. Let $w$ of $C_1$ be the automorphism defined by $w(x,y)=\left(\frac{x}{x-1}, \frac{y}{x-1}\right)$. The quotient curve $C_1/\diam{w}$ is isomorphic to the elliptic curve with LMFDB label 26.a2 (Cremona label 26a1). This elliptic curve has 3 rational points and the only rational points on $C_1$ in the preimages of the rational points on 26a2 are
$$C_1(\Q)=\{(1/2, 0), \infty\}.$$
We obtain that the cubic points corresponding to $C_1(\Q)$ are cusps.

Computing the rational points on the curve $C_4$ is straightforward - since $C_4$ has genus 2, the Chabauty method is implemented in Magma, and Magma returns all the rational points. We get that $$C_4(\Q)=\left\{(-1/4,\pm 201/4),(0,0), \pm \infty\right\}.$$
From the value $t=-1/4$ we get the exceptional curve \eqref{exc_curve}, while the values $t=0$ and $\pm \infty$ correspond to cusps of $X_1(16)$.

Finding $C_2(\Q)$ and $C_3(\Q)$ is much more technically difficult and requires the combined use of the Mordell-Weil sieve and the Chabauty method. An additional difficulty is that the Chabauty method is not implemented in Magma for genus $>2$ curves; this required us to write our own implementation for the Chabauty method for genus $>2$ hyperelliptic curves in Magma. As the methods used in the determination of the rational points on $C_2$ and $C_3$ is similar to what we use in determining the points on a curve in \Cref{curves:20}, we give a detailed explanation of the determination of rational points on $C_2$ and $C_3$ in \Cref{sec:computations} and in particular in \Cref{thm:rat_pts}.

We obtain
$$C_2(\Q)=\{(0,0),\pm\infty \} \text{ and } C_3(\Q)=\{(0,0),\pm\infty \},$$
and that the cubic points on $X_1(16)$ corresponding to $C_2(\Q)$ and $C_3(\Q)$ are cusps.
\end{proof}

\begin{lemma}
\label{curves:20}
There are no elliptic curves with torsion $\Cn {20}$ over cyclic cubic fields.
\end{lemma}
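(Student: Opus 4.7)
By \Cref{cor:inverse_image} and \Cref{lem:20_1}, every non-cuspidal cubic point on $X_1(20)$ is, up to a diamond operator, of the form $P \in g_i^{-1}(t_0)$ for some $t_0 \in \PP^1(\Q)$ and some $i \in \{1,2\}$, where $g_1, g_2$ are the two degree $3$ maps of \Cref{lem:20_1}. Since the diamond operators preserve the field of definition of a point, it suffices to determine those $t_0$ for which the cubic field $K_{t_0}:=\Q(P)$ is cyclic. But $K_{t_0}$ is the stem field of the cubic obtained by specializing $f_i$ at $t_0$, and so $K_{t_0}$ is cyclic if and only if the discriminant $\Delta_i(t_0)$ of this specialized cubic is a non-zero rational square.

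The plan is therefore to introduce, for each $i \in \{1,2\}$, the hyperelliptic curve
\[
C_i(20) \colon u^2 = \Delta_i(t),
\]
obtained from $f_i$ in \eqref{fi:20_1} and \eqref{fi:20_2} after stripping square factors from the discriminant with respect to the variable in which $f_i$ has degree $3$, and to compute $C_1(20)(\Q)$ and $C_2(20)(\Q)$ in full. Concretely, I would first write down $\Delta_1(t)$ and $\Delta_2(t)$ explicitly in Magma, read off the genera, and run $2$-descent on each Jacobian to obtain upper bounds on the Mordell--Weil rank. If a curve has rank $0$, its rational points follow at once by reduction modulo a prime of good reduction; if the rank is positive and the genus is $2$, Magma's built-in Chabauty routine suffices; and if the rank is positive with genus $\ge 3$, I would invoke the custom combination of Chabauty and the Mordell--Weil sieve developed for the proof of \Cref{lem:rat_16} and justified in \Cref{sec:computations} via \Cref{thm:rat_pts}.

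Once $C_1(20)(\Q)$ and $C_2(20)(\Q)$ are known, the remaining step is to trace each rational point $(t_0,u_0)$ back through $g_i$ and determine, using the explicit description of the cuspidal locus $C_1(20)$ recorded in the proof of \Cref{lem:20_1}, whether the resulting degree $3$ divisor on $X_1(20)$ is cuspidal or non-cuspidal. The goal is to verify that every rational point produces a cuspidal divisor; by \Cref{thm:jeon}, there are only finitely many sporadic non-cuspidal candidates to rule out. The main technical obstacle will be the Chabauty step for whichever of $C_1(20), C_2(20)$ turns out to have genus $\ge 3$ with Mordell--Weil rank $\ge 1$, for which the standard Magma routines are insufficient; this is precisely where the custom machinery of \Cref{sec:computations} must be applied, after verifying that suitable auxiliary primes can be chosen so that the Mordell--Weil sieve together with $p$-adic Chabauty cuts out only the known cuspidal points.
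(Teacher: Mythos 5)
Your plan is essentially identical to the paper's proof: reduce via \Cref{cor:inverse_image} and \Cref{lem:20_1} to the two degree-$3$ maps, form the hyperelliptic curves $y^2=\Delta(f_i)$, determine their rational points, and check that all of them correspond to cusps. The only difference is a computational shortcut: for $C_1(20)$ (genus $3$) the paper does not run Chabauty but instead uses an obvious degree-$2$ map to a rank-$0$ elliptic curve and pulls back its six rational points, reserving the Mordell--Weil sieve plus custom Chabauty machinery of \Cref{sec:computations} for $C_2(20)$ only.
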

\begin{proof}
Since we've proved in Lemma \cref{cor:inverse_image} that all cubic points on $X_1(20)$ can be obtained by the action of diamond operators on the inverse images of $\PP^1(\Q)$ under the maps \eqref{maps:20} and  \eqref{maps:20_2}, this allows us to describe all elliptic curves with $\Cn {20}$ torsion over cubic fields as a union of parametric families $E_t/K_t$, where each curve $E_t$ is defined over the cubic field $K_t$.

Thus, as in the proof of \Cref{lem:rat_16} we have to find all values $t$ such that $\Delta(K_t)$ is a square, since these will correspond to values of $t$ for which $K_t$ is cyclic. This is equivalent to finding all the values $t$ such that the discriminant $\Delta(f_i)$ is a square for $i=1,2$, and where the $f_i$ are given in \eqref{fi:20_1}, and \eqref{fi:20_2}.

We obtain the following hyperelliptic curves; each curve $C_i$ is obtained by taking $y^2=\Delta(f_i)$:
\begin{align}
C_1 = C_1(20) :y^2&=-27t^8 + 22t^4 + 5,\\
C_2 = C_2(20) :y^2&=t^{10} - 6t^9 + 15t^8 - 32t^7 + 51t^6 -
    54t^5 + 65t^4 - 64t^3 + 24t^2 - 4t.
\end{align}
The genus 3 curve $C_1$ has an obvious degree $2$ map $f$ to the rank 0 elliptic curve
$$E:y^2=-27t^4+t^2+5.$$
We find that $E(\Q)\simeq \Cn 6$ and the only $\Q$-rational points in $f^{-1}(E(\Q))$ are
$$C_1(\Q)=\{(\pm 1, 0)\}.$$

Finding $C_2(\Q)$ is much more technically difficult and requires the combined use of the Mordell-Weil sieve and the Chabauty method. We describe these methods and how they were used in detail in \Cref{sec:computations} and in particular in \Cref{thm:rat_pts} (as they are similar to the ones used in the proof of \Cref{lem:rat_16}).

We obtain
$$C_2(\Q)=\{(0,0),\pm\infty \}$$
and that the cubic points on $X_1(20)$ corresponding to $C_2(\Q)$ are cusps.
\end{proof}

\section{Explicit rational point computations}
\label{sec:computations}
In the previous section we determined the rational points on $C_1(16),C_4(16)$ and $C_1(20)$. This was relatively easy since they either had a map to a rank 0 elliptic curve whose rational points are easy to compute, or were of genus 2 and the Jacobian had rank 1 so that Magma could compute the set of rational points using explicit Chabauty. For the remaining three curves $C_2(16)$, $C_3(16)$ and $C_2(20)$ the $r < g$ condition needed for explicit Chabauty is satisfied, so from a theoretical point of view it should be easy to compute all rational points on these three curves using Chabauty-Coleman (see \cite{mccallum-poonen}) in combination with some Mordell-Weil sieving (see \cite{bruin-stoll}). However a general implementation for explicit Chabauty for hyperelliptic curves of genus $>2$ is not implemented in Magma yet so we briefly describe how we manually computed all rational points on these three curves. Before the actual computations, we briefly describe how to do Mordell-Weil sieving without knowing the full Mordell-Weil group.

\subsection{Mordell-Weil sieving with partial Mordell-Weil information}
Mordell-Weil sieving, as explained for example in \cite{bruin-stoll}, is a useful tool that helps find all rational points on a curve $C$ using the Mordell-Weil group of its Jacobian $J$. In order to simplify the exposition\footnote{At the cost of some technical difficulties one could drop the assumption on the existence of a rational point and the assumption that the primes in $S$ are of good reduction. Additionally one can even let $S$ consist of prime powers instead of primes} we fix a rational point $\infty \in C(\Q)$ and a finite set of primes $S = \lbrace p_1,\ldots,p_k \rbrace$ of good reduction for $C$ and use the point $\infty$ as a base point for the maps $\mu : C(\Q) \to J(\Q)$ and $\mu : C(\F_p) \to J(\F_p)$ for all primes $p \in S$. Then for every integer $N$ we have the following commutative diagram:
$$
\xymatrix{ C(\Q) \ar[r] \ar[d]& J(\Q)/NJ(\Q) \ar[d]\\
\prod_{p \in S} C(\F_p)\ar[r] &  \prod_{p \in S} J(\F_p)/NJ(\F_p)}.
$$
The usefulness of this commutative diagram is that one can show that for a fixed $q\in S$ one has that $C(\Q) \to C(\F_q)$ factors via \begin{align}\prod_{p \in S} C(\F_p) \times_ {\prod_{p\in S} J(\F_p)/NJ(\F_p)} J(\Q)/NJ(\Q) \to C(\F_q). \label{eq:mws}\end{align}
In particular one can show that certain points in $C(\F_q)$ have no rational points reducing to them if the map in \cref{eq:mws} is not surjective. However in order to compute the fiber product in \cref{eq:mws} one has to know all of $J(\Q)$. In practice (for example for the curves relevant to his article) one sometimes can find explicit generators of a finite index subgroup $ \Gamma \subseteq J(\Q)$. Even though computing $J(\Q)$ using the knowledge of $\Gamma$ is a finite computation, this is not always that easy in practice. This is where the following lemma comes in useful:
\begin{lemma}\label{lem:mws}
	Let $C$ be a curve with Jacobian $J$, $S := \lbrace p_1,\ldots,p_k \rbrace$ a finite set of primes of good reduction, $N$ a positive integer, $\Gamma \subseteq J(\Q)$ a subgroup of finite index, $e$ such that $eJ(\Q) \subseteq \Gamma$ and $d := gcd(N,e)$, then for every $q \in S$ the map $C(\Q) \to C(\F_q)$ factors via:
	\begin{align}{\rm MWS}_{\Gamma,N,S,d} := \prod_{p\in S} C(\F_p) \times_ {\prod_{p\in S} J(\F_p)/NJ(\F_p)} \Gamma/N\Gamma \to C(\F_q). \label{eq:mws2}\end{align}
	where in the above fiber product the map $\prod_{p\in S} C(\F_p) \to {\prod_{p\in S} J(\F_p)/NJ(\F_p)}$ is $[d] \circ \mu$ on each coordinate and $\Gamma/N\Gamma \to {\prod_S J(\F_p)/NJ(\F_p)}$ is just the reduction map.
\end{lemma}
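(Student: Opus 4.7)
The plan is to show that for every $P \in C(\Q)$ one can produce an element of the fiber product ${\rm MWS}_{\Gamma,N,S,d}$ whose projection to $\prod_{p \in S} C(\F_p)$ is the tuple of reductions $(\bar P_p)_{p \in S}$; composing with the coordinate projection to $C(\F_q)$ then recovers $\bar P_q$, which is the image of $P$ under the reduction map $C(\Q) \to C(\F_q)$. Establishing this existence, rational point by rational point, is exactly the content of the factorisation statement.

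To construct the candidate second coordinate in $\Gamma/N\Gamma$, I would exploit the definition $d = \gcd(N,e)$ via B\'ezout: choose integers $\alpha,\beta \in \Z$ with $d = \alpha N + \beta e$, so that in particular $\beta e \equiv d \pmod{N}$. Because $e\mu(P) \in eJ(\Q) \subseteq \Gamma$ and $\Gamma$ is a subgroup of $J(\Q)$, the element
$$\gamma_P := \beta \cdot e\mu(P)$$
lies in $\Gamma$; let $\bar\gamma_P$ denote its class in $\Gamma/N\Gamma$. This is the candidate lift.

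Next I would verify that the pair $\bigl((\bar P_p)_{p\in S},\,\bar\gamma_P\bigr)$ lies in the fiber product, i.e.\ that its two images in $\prod_{p \in S} J(\F_p)/NJ(\F_p)$ agree at every $p \in S$. Using that $\mu$ commutes with reduction mod $p$, the image coming from $C(\F_p)$ via $[d] \circ \mu$ is $d\cdot\mu(\bar P_p) = d\cdot\overline{\mu(P)}_p$, while the image coming from $\Gamma/N\Gamma$ via reduction is $\beta e \cdot \overline{\mu(P)}_p$. Their difference is
$$(\beta e - d)\,\overline{\mu(P)}_p \;=\; -\alpha N \cdot \overline{\mu(P)}_p \;\in\; NJ(\F_p),$$
so they coincide in $J(\F_p)/NJ(\F_p)$, confirming that the candidate pair is a point of ${\rm MWS}_{\Gamma,N,S,d}$.

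The argument is essentially B\'ezout bookkeeping; the only conceptual check is that the reduction map $\Gamma/N\Gamma \to \prod_p J(\F_p)/NJ(\F_p)$ is well-defined, which is immediate since reduction sends $N\Gamma$ into $NJ(\F_p)$ at each $p$. Note that the assignment $P \mapsto \bar\gamma_P$ depends on the choice of $\beta$, and different B\'ezout coefficients may yield different classes in $\Gamma/N\Gamma$; this is harmless because no uniqueness of the map $C(\Q) \to {\rm MWS}_{\Gamma,N,S,d}$ is asserted, only the existence of \emph{some} lift for every rational point, which is exactly what the word \emph{factors via} demands.
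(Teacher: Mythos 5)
Your proposal is correct and is essentially the paper's own argument: the paper's ``integer $f$ with $ef \equiv d \bmod N$'' is exactly your B\'ezout coefficient $\beta$, and your pointwise verification that $d\,\overline{\mu(P)}_p$ and $\beta e\,\overline{\mu(P)}_p$ agree modulo $NJ(\F_p)$ is just an unpacked version of the paper's commutative diagram built from the multiplication-by-$ef$ map $J(\Q)/NJ(\Q) \to \Gamma/N\Gamma$ followed by reduction.
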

\begin{proof}
	The definition of $d$ assures the existence of an integer $f$ such that $d \equiv ef \mod N$, so the lemma follows from the following commutative diagram:
	$$\xymatrix{C(\Q) \ar[r]^{\mu}  \ar[d]& J(\Q)/NJ(\Q) \ar[r]^{[ef]=[d]} \ar[d] & \Gamma/N\Gamma \ar[d]\\
	\prod_{p\in S} C(\F_p) \ar[r]^{\mu} & \prod_{p\in S} J(\F_p)/NJ(\F_p) \ar[r]^{[ef]=[d]} & \prod_{p\in S} J(\F_p)/NJ(\F_p)
	}.$$
\end{proof}
The above lemma allows one to circumvent the computation of $J(\Q)$ because one can often compute $d$ without knowing $J(\Q)$ or $e$.
\begin{example}\label{ex:mws}
Suppose $J(\Q)$ has rank 1 and $x \in J(\Q)$ is a generator of the free part. Let $d_{tors}$ be such that $d_{tors} J(\Q)_{tors} \subseteq \Gamma_{tors}$, and let $d_{free}$ be such that $N/d_{free}$ is the order of $x$ in  $\prod_{p\in S} J(\F_p)/NJ(\F_p)$ then one can find an $e$ such that $d = gcd(lcm(d_{tors}, N),d_{free})$.
\end{example}
\subsection{Rational points on $C_2(16), C_3(16)$ and $C_2(20)$}
\begin{theorem}\label{thm:rat_pts}
The hyperelliptic curves
\begin{align}
C_2(16):y^2&= t^8 - 12t^7 + 54t^6 - 112t^5 + 97t^4 -
32t^3 + 4t^2 - 4t,\\
C_3(16):y^2&=t(t^7 - 2t^5 + 16t^4 - 15t^3 + 36t^2 - 32t + 4) \textrm{ and }\\
C_2(20) :y^2&=t^{10} - 6t^9 + 15t^8 - 32t^7 + 51t^6 -
54t^5 + 65t^4 - 64t^3 + 24t^2 - 4t
\end{align}
each have exactly $3$ rational points, namely the affine point $P_1 = (0,0)$ and the two points $P_2 = (1:1:0), P_3 = (1:-1:0) \in C(\Q)$ at infinity. Furthermore, their Jacobians are of rank 1 over $\Q$ and the three rational points generate a finite index subgroup of the Jacobian.
\end{theorem}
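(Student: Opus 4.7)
The plan is to combine Chabauty--Coleman with the partial Mordell--Weil sieve of \Cref{lem:mws} for each of the three curves $C\in\{C_2(16),C_3(16),C_2(20)\}$. First I would verify by direct substitution that $P_1=(0,0)$ and $P_2=(1:1:0)$, $P_3=(1:-1:0)$ lie on $C$ (the two points at infinity exist because the leading coefficient of the defining polynomial is $1$, hence a square). Fixing $P_2$ as basepoint for the Abel--Jacobi map $\mu:C\to J$, put $D_1:=[P_1-P_2]$ and $D_2:=[P_3-P_2]$, and let $\Gamma:=\langle D_1,D_2\rangle\subseteq J(\Q)$. A $2$-descent on $J$ in Magma yields $\mathrm{rk}\,J(\Q)=1$ (for $C_2(16)$ and $C_3(16)$ this was already obtained in the proof of \Cref{lem:rat_16}, and the same descent works for $C_2(20)$). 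By computing the order of the images of $D_1,D_2$ in $J(\F_p)$ for a handful of small primes of good reduction, one checks that at least one of $D_1,D_2$ has infinite order, so $\Gamma$ has finite index in $J(\Q)$.

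The next step is to pin down an integer $e$ with $eJ(\Q)\subseteq \Gamma$ well enough to apply \Cref{lem:mws}. Using \Cref{ex:mws}, one takes a bound $d_{\mathrm{tors}}$ on the index of the torsion part of $\Gamma$ in $J(\Q)_{\mathrm{tors}}$ from the injection $J(\Q)_{\mathrm{tors}}\hookrightarrow J(\F_p)$ for a few primes of good reduction, and extracts $d_{\mathrm{free}}$ from the order of the free generator inside $\prod_{p\in S}J(\F_p)/NJ(\F_p)$. One may then choose $N$ and the set $S$ of primes of good reduction so that $d=\gcd(\mathrm{lcm}(d_{\mathrm{tors}},N),d_{\mathrm{free}})$ gives a sieve tight enough that, for some $q\in S$, the image of $\mathrm{MWS}_{\Gamma,N,S,d}$ in $C(\F_q)$ consists only of the residues of $P_1,P_2,P_3$.

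With the sieve narrowing the rational points to three residue disks, I would finish with explicit Chabauty--Coleman. Since $\mathrm{rk}\,J(\Q)=1<g\in\{3,4\}$, the $\Q_p$-vector space of regular differentials on $C_{\Q_p}$ annihilating the closure of $J(\Q)$ in $J(\Q_p)$ has dimension at least $g-1\geq 2$. Pick a single such annihilating differential $\omega$ obtained numerically from $\Gamma$ via Coleman integration, and compute its local expansion on each of the three residue disks; by Coleman's theorem, the number of rational points in a disk is bounded by $1+\text{(number of extra zeros of }\omega\text{)}$. Choosing $p$ so that $\omega$ has no extra zeros on any of these three disks shows each disk contains exactly the known rational point, completing the determination $C(\Q)=\{P_1,P_2,P_3\}$.

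The hard part is the implementation: explicit Chabauty for hyperelliptic curves of genus $>2$ is not available in Magma, so a custom routine has to be written to compute the required Coleman integrals to sufficient $p$-adic precision, and to feed the output of the sieve into the Chabauty bound disk-by-disk. The other delicate point is keeping the modulus $N$ and the prime set $S$ small enough that $\Gamma/N\Gamma$ and $\prod_{p\in S}J(\F_p)/NJ(\F_p)$ remain computable while still being large enough to cut the sieve down to the three known residues; this will likely require iterating the choice of $N$ and $S$ for each of the three curves separately.
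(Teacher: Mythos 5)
Your proposal is correct and follows essentially the same route as the paper: a 2-descent to get rank $1$, the differences of the three known points generating a finite-index subgroup $\Gamma$, a single Coleman-annihilating differential (computed with a custom implementation, since Magma lacks explicit Chabauty in genus $>2$) to handle the residue discs of the known points, and the partial Mordell--Weil sieve of \Cref{lem:mws} with \Cref{ex:mws} to rule out the remaining discs. The only difference is that you run the sieve before Chabauty while the paper does the reverse; since the two steps treat disjoint sets of residue discs, the order is immaterial.
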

\begin{proof}
	In the parts of the proof below which are the same for all three curves we will just write $C$ for the curve and $J$ for the Jacobian of $C$.

	Magma has an implementation of two-descent for hyperelliptic curves, and the result of a two-descent computation is that the rank is at most one for each of the three curves, and one easily checks that $[P_1 - P_2] = -[P_1-P_3]$ is a point of infinite order on $J(\Q)$ for all three curves. So this proves the "furthermore" part of \Cref{thm:rat_pts}. What remains is to do explicit Chabauty and then some Mordell-Weil sieving to actually compute the rational points.
	
	Since these curves are hyperelliptic, it is easy to give a basis for the K\"ahler differentials, namely $$\omega_0 := \frac {x^0{\rm d}x} {y}, \omega_1 := \frac {x^1{\rm d}x} {y}, \ldots, \omega_{g-1} := \frac {x^{g-1}{\rm d}x} {y},$$ where $g$ is the genus of the curve. And if $p$ is a prime of good reduction this is even a $\Z_p$-basis.
	
	The next step in the Chabauty method is to find $p$-adic differentials $\omega \in H^0(C,\Omega^1_{C/\Z_p})$ that vanish on $J(\Q)$ under the $p$-adic integration pairing
	\begin{align*}
	\left \langle \_,\_ \right \rangle :  J(\Q_p) \times H^0(C,\Omega^1_{C/\Z_p}) &\to \Q_p,\\
	(D,\omega)	 & \mapsto \int_0^D \omega.
	\end{align*}
	Since $[P_1-P_2]$ generates a subgroup of $J(\Q)$ of finite index one has that a $p$-adic 1-form $\omega$ vanishes on all of $J(\Q)$ if and only if $\int_{P_2}^{P_1} \omega = 0$. Explicitly computing $\int_{P_2}^{P_1} \omega_i$ up to some $p$-adic precision allows one to find $a_i \in \Z_p$ such that
	$\sum_{i=0}^{g-1} a_i \int_{P_2}^{P_1}  \omega_i = \int_{P_2}^{P_1} \sum_{i=0}^{g-1} a_i  \omega_i =0$. For $C=C_2(16)$ we use $p=5$ from now on, and for $C=C_3(16)$ and $C=C_2(20)$ we use $p=3$. The result of this computation gives
	\begin{align*}
	\int_{P_2}^{P_1} \omega \equiv 0 \mod p^5 = 5^5 & & \text{for } \omega &:= \omega_2 + 2983 \omega_0 \in \Omega^1_{C_2(16)/\Z_5} \\
	\int_{P_2}^{P_1} \omega \equiv 0 \mod p^5 = 3^5 & & \text{for } \omega &:=\omega_2 + 118 \omega_0  \in \Omega^1_{C_3(16)/\Z_3} \\
	\int_{P_2}^{P_1} \omega \equiv 0 \mod p^5 = 3^5 & & \text{for } \omega &:= \omega_3 + 4 \omega_1 + \omega_0\in \Omega^1_{C_2(20)/\Z_3} \\
	\end{align*}
	One easily verifies that these $\omega$ have no zeros at the $P_{i,\F_p} \in C(\F_p)$ so that the conclusion of the Chabauty computation is that the residue discs of the $P_{i,\F_p}$ contain exactly one rational point. What remains to do in order to show that $C(\Q)=\lbrace P_1,P_2,P_3\rbrace$ is to show that there are no rational points reducing to the points in $C(\F_p) \setminus \lbrace P_{1,\F_p},P_{2,\F_p},P_{3,\F_p}\rbrace$; this is done using Mordell-Weil sieving.
	
	Computing $\#J(\F_p)$ for the first 20 primes of good reduction shows that $\#J(\Q)_{tors}$ is either 1 or 3. Instead of computing the exact 3-torsion we will use \Cref{lem:mws} in order to do Mordell-Weil sieving without computing the 3-torsion. In all three cases the finite index subgroup $\Gamma$ will be the group
	$$\Gamma := \diam{[P_1-P_2]} \subseteq J(\Q).$$

	{\bf Mordell weil sieving for $C_2(16)$.} The primes we will use to sieve with are $5$ and 11. The group structures of the Jacobian at these primes are $J(\F_5) \cong \Z/3\cdot 11^2 \Z$ and $J(\F_{11}) \cong \Z/2\Z \times  \Z/2^5 \cdot 3\cdot 11 \Z$. For $N$ we will take $22$ in order to exploit the interference between the $11$ torsion of $J(\F_5)$ and $J(\F_{11})$ and the non-cyclicity of the $2$ torsion of $J(\F_{11})$.
	
	The image of $x=[P_1 - P_2]$ in $J(\F_5)/22J(\F_5)$ is of order 11 and a generator. The 11 points in $C_2(16)(\F_5)$ map to $0x, 2x, 2x, 4x, 4x, 5x, 5x, 7x, 7x, 9x$ and $10x$ in $J(\F_5)/22J(\F_5)$  respectively, where $0x,9x$ and $10x$ correspond to the known 3 rational points. The image of $x$ in $J(\F_{11})/22J(\F_{11})$ is of order 22 so that as in \Cref{ex:mws} we can use $d=1$. It is also clear that $x$ generates a subgroup of index 2. Of the 16 points in $C_2(16)(\F_{11})$ there are 8 that map to a multiple of $x$, the multiples are $0, 3, 10, 10, 17, 20, 21$ and $21$. In particular the image of  \begin{align}{\rm MWS}_{\Gamma,22,\set{5,11},1}  \to C(\F_5). \end{align} consists exactly of the rational points so that we are done by \Cref{lem:mws}.
	
	\vspace{0.5em}
	{\bf Mordell weil sieving for $C_3(16)$.} In this case only a single prime, namely $p=3$, is used for sieving, but we use the auxiliary prime $17$ in order to show that $\Gamma \subseteq J(\Q)$ is saturated at $2$. The group structures of the Jacobian at these two primes are $J(\F_3) \cong \Z/2\cdot 3^3 \Z$ and $J(\F_{17}) \cong  \Z/2 \cdot 3^2\cdot 293 \Z$.

	In this case there is the nice coincidence that the 3 points in $C_3(16)(\F_3)$ having a known rational point in their residue class are all sent to 0 in $J(\F_3)/2J(\F_3) \cong \Z/2\Z$ while the two other points in $C_3(16)(\F_3)$ are sent to the nonzero element. Furthermore, the image of $\Gamma$ in $J(\F_3)/2J(\F_3)$ is 0 so that just using the Mordell-Weil sieve at the prime $3$ using $N=2$ gives the desired answer as soon we can prove that the finite index submodule $\Gamma\subseteq J(\Q)$ is saturated at 2. But the latter is indeed saturated at 2 since $J(\Q)$ has no two torsion and the generator of $\Gamma$ is sent to the nonzero element in $J(\F_{17})/2J(\F_{17})$.
	
	\vspace{0.5em}
	{\bf Mordell weil sieving for $C_2(20)$.} The primes we will use to sieve with are $3$ and 37. The group structures of the Jacobian at these primes are $J(\F_3) \cong \Z/3\cdot 47 \Z$ and $J(\F_{37}) \cong \Z/3\Z \times  \Z/3 \cdot 47 \cdot 4651 \Z$. For $N$ we will take $47$ in order to exploit the interference between the $47$ torsion of $J(\F_3)$ and $J(\F_{37})$.
	
	The image of $x=[P_1 - P_2]$ in $J(\F_3)/47J(\F_3)$ is of order 47 and a hence a generator, so that due to \Cref{ex:mws} we can use $d=1$. The 5 points in $C_2(20)(\F_3)$ map to $0x, 19x, 26x, 45x$ and $46x$ in $J(\F_3)/47J(\F_3)$  respectively, where $0x,45x$ and $46x$ correspond to the known rational points. Each of the 39 points in $C_2(20)(\F_{37})$ is mapped to a multiple $nx$ of $x$, where the values of $n$ are as follows:
	\begin{align*}&0, 1, 1, 1, 2, 2, 3, 3, 6, 6, 7, 9, 12, 14, 14, 16, 17, 18, 22, 23, 27, \\
	&28, 29, 31, 31, 33, 36, 38, 39, 39, 42, 42, 43, 43, 44, 44, 44, 45, 46.\end{align*}
	Now one sees that $19$ and $26$ are not in this list, so that in particular the image of	\begin{align}{\rm MWS}_{\Gamma,47,\set{3,37},1}  \to C(\F_3). \label{eq:mws3}\end{align} consists exactly of the rational points so we are done by \Cref{lem:mws}.
\end{proof}
\section{Torsion groups over complex and over totally real and non-Galois cubic fields}

\begin{theorem}\label{mainthm2}
Suppose $E/K$ be an elliptic curve over a complex cubic field $K$. Then $E(K)_{tors}$ is isomorphic to one of the following groups:
$$\Cn n \text{ for }n= 1\ldots 16, 18, 20$$
$$\Cn 2 \times \Cn {2m} \text{ for }m=1\ldots 6.$$
Each of the listed groups occurs for infinitely many distinct $j$-invariants.
\end{theorem}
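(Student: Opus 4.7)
The plan is to combine the upper bound from Theorem \ref{thm_dehmz} with explicit constructions for each listed group, following the same template as the proof of Theorem \ref{mainthm1} but aimed at complex rather than cyclic cubic fields.

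For the upper bound, Theorem \ref{thm_dehmz} shows that the only cubic-field torsion groups missing from the list in Theorem \ref{mainthm2} are $\Cn{21}$ and $\Cn 2 \times \Cn{14}$. By Remark \ref{rem1} the unique cubic example with $\Cn{21}$ torsion is defined over a cyclic cubic field, and by the Bruin--Najman theorem cited above any elliptic curve over a cubic field with torsion $\Cn 2 \times \Cn{14}$ has that cubic field cyclic. Since cyclic cubic fields are totally real, neither group occurs over a complex cubic field.

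For the existence statement I would split the listed groups $G$ into two classes. If $G$ appears on Mazur's list, fix any complex cubic field $K$ and base-change each of the infinitely many $E/\Q$ with $E(\Q)_{tors}=G$ to $K$. Any strict inclusion $E(K)_{tors}\supsetneq G$ forces $E$ to correspond to a $\Q$-rational point on one of finitely many auxiliary modular curves, one for each overgroup $G'$ of $G$ from the cubic list, and a standard non-isotriviality argument shows only finitely many such $E/\Q$ arise for each fixed $K$ and each $G'$. Hence infinitely many $E/\Q$ still satisfy $E(K)_{tors}=G$ after base change, giving infinitely many $j$-invariants. If $G$ is not on Mazur's list --- that is, $\Cn n$ for $n \in \{11,13,14,15,16,18,20\}$ or $\Cn 2 \times \Cn{2m}$ for $m=5,6$ --- I would apply the strategy of Section 2: exhibit a $\Q$-rational function $f\colon X_G \to \PP^1$ of degree $3$ on the relevant modular curve whose function field extension has Galois group $S_3$. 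For $t \in \PP^1(\Q)$ outside a finite exceptional set, the fibre $f^{-1}(t)$ defines a cubic point whose residue field has discriminant of the same sign as the polynomial $\Delta_f(t)$, and a direct sign analysis on the explicit $f$ then yields infinitely many $t$ for which $\Delta_f(t)<0$, producing infinitely many complex cubic specialisations with distinct $j$-invariants.

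The hard part will be verifying the existence of a suitable degree-$3$ $S_3$-function, together with a range where its discriminant is negative, in the most restrictive cases $\Cn{16}$ and $\Cn{20}$. By Lemmas \ref{lem:16_1} and \ref{lem:20_1} the degree-$3$ functions on $X_1(16)$ and $X_1(20)$ are already explicitly enumerated, so this reduces to a direct check on the polynomials $f_i$ listed there; torsion growth is not an obstacle in these cases since neither $\Cn{16}$ nor $\Cn{20}$ has a proper overgroup on the cubic list of Theorem \ref{thm_dehmz}.
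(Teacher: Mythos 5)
Your upper-bound argument is correct and is exactly what the paper leaves implicit: by \Cref{thm_dehmz} only $\Cn{21}$ and $\Cn{2}\times\Cn{14}$ need to be excluded, and both occur only over cyclic cubic fields (by \Cref{rem1} and the Bruin--Najman theorem), which are totally real. Your treatment of the non-Mazur groups via degree-$3$ functions on the modular curves and a sign analysis of the discriminant is also essentially the paper's method: the paper runs this uniformly for \emph{all} the listed groups using the explicit one-parameter families $E_t/K_t$ of Jeon--Kim--Lee \cite{jky}, in which the torsion is exactly $G$ and $K_t$ is the cubic field cut out by a degree-$3$ map, and then observes that a single real $t_0$ with $\Delta(K_{t_0})<0$ gives an open interval, hence infinitely many rational $t$, with $\Delta(K_t)<0$.

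The genuine gap is in your argument for the groups on Mazur's list. You fix one complex cubic field $K$, base-change the infinitely many $E/\Q$ with $E(\Q)_{\tors}\simeq G$, and claim that for each overgroup $G'$ only finitely many of these $E$ acquire $G'$-structure over $K$ "by a standard non-isotriviality argument." This is false when the auxiliary modular curve $X_{G'}$ has genus $0$ or $1$ with infinitely many points over $K$: the $j$-map is then non-constant on an infinite set of $K$-points, and infinitely many of them can have \emph{rational} $j$-invariant without being isotrivial. Concretely, take $G$ trivial and $G'=\Cn{2}$: for any fixed cubic field $K=\Q(\alpha)$, the curves $y^2=(x-\alpha)(x-\sigma\alpha)(x-\tau\alpha)$ for $\alpha$ ranging over generators of $K$ are defined over $\Q$, realize infinitely many $j$-invariants, generically have trivial torsion over $\Q$, and all gain a $2$-torsion point over $K$. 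So torsion growth over a fixed $K$ can occur for infinitely many non-isomorphic $E/\Q$, and your finiteness claim does not establish that infinitely many curves retain torsion exactly $G$ after base change. (The desired conclusion is still true, but it needs a different argument --- e.g., the explicit families of \cite{jky}, which is what the paper uses, or a count showing the complement of the growth locus is infinite.)
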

\begin{proof}
We prove this theorem by explicit computation. We use the infinite families $E_t/K_t$ from \cite{jky}, where for each torsion group from the statement of the theorem, an infinite family of elliptic curves with such torsion is given. For each family we have to determine whether there exists a value $t\in \Q$ such that $\Delta(K_t)<0$ (which is equivalent to $K_t$ being complex).

If there exists at least one value $t\in \R$ such that $\Delta(K_t)<0$ then this immediately means that there are infinitely many $t\in \Q$ such that $\Delta(K_t)<0$. For all the families we explicitly find a value $t$ such that $\Delta(K_t)<0$ and we are done.

\end{proof}

\begin{theorem}\label{mainthm3}
Suppose $E/K$ be an elliptic curve over a totally real cubic field $K$ whose Galois group is $S_3.$ Then $E(K)_{tors}$ is isomorphic to one of the following groups:
$$\Cn n \text{ for }n= 1\ldots 16, 18, 20$$
$$\Cn 2 \times \Cn {2m} \text{ for }m=1\ldots 6.$$
Each of the listed groups occurs for infinitely many distinct $j$-invariants.
\end{theorem}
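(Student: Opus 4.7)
The plan is to follow the proof script of Theorem \ref{mainthm2}: invoke the infinite families $E_t/K_t$ of \cite{jky}, one for each torsion group $G$ on the list, and specialize $t$ so that $K_t$ becomes a totally real cubic field with Galois group $S_3$. The upper bound---that no other torsion group can appear---is immediate: by Theorem \ref{thm_dehmz} the only further possibilities over arbitrary cubic fields are $\Cn{21}$ and $\Cn 2 \times \Cn{14}$, but by Remark \ref{rem1} the former only occurs over a cyclic cubic field, and by the theorem of Bruin and Najman cited in the previous section the latter also forces $K$ to be cyclic.

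For the realization, let $\Delta(t) \in \Q(t)$ denote the discriminant of a cubic defining polynomial for $K_t/\Q$. Then $K_t$ is totally real precisely when $\Delta(t) > 0$, and the Galois closure of $K_t$ has group $S_3$ (rather than $\Cn 3$) precisely when $\Delta(t)$ is not a square in $\Q$. For each of the families from \cite{jky} corresponding to the torsion groups on the list, I would first exhibit a single rational value $t_0$ satisfying both $\Delta(t_0) > 0$ and $\Delta(t_0) \notin (\Q^\times)^2$; this is an explicit search of exactly the same flavor as the search for a value with $\Delta(t) < 0$ performed in the proof of Theorem \ref{mainthm2}.

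To pass from a single specialization to infinitely many non-isomorphic specializations, I argue as follows. The condition $\Delta(t) > 0$ is open in $\R$, so once a rational $t_0$ with $\Delta(t_0) > 0$ is found, an entire open interval $I \subset \R$ around $t_0$ consists of values with $\Delta(t) > 0$. The rational $t \in I$ for which $\Delta(t)$ is a rational square are precisely the $t$-coordinates of the rational points on the hyperelliptic curve $y^2 = \Delta(t)$, and by Hilbert irreducibility (or, since in our families $\Delta(t)$ has large enough degree that the associated hyperelliptic curve has genus $\geq 2$, by Faltings) this set is thin in $I \cap \Q$. Infinitely many rationals $t \in I$ therefore satisfy both conditions, and since $j(E_t)$ is a non-constant rational function of $t$ in each family of \cite{jky}, this produces infinitely many pairwise distinct $j$-invariants, as required.

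The only genuine obstacle is the bookkeeping: one must run the above explicit search for each of the torsion groups in the list. A minor technical caveat is that one must verify for each family that $\Delta(t)$ is not identically a square in $\Q(t)$, since otherwise $K_t$ would always be cyclic or $\Q$ itself and no $S_3$ specialization could exist. But the existence of a single $t_0$ with $\Delta(t_0) \notin (\Q^\times)^2$ already rules this out, so this check is subsumed in the main computation and presents no additional difficulty.
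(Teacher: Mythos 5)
Your proposal follows the paper's proof essentially verbatim: the same families $E_t/K_t$ from \cite{jky}, the same reduction to finding one specialization with $\Delta(K_t)>0$, and the same thin-set argument (the paper phrases it via the double cover $C_2\colon y^2=\Delta(K_t)\to C_1\colon y=\Delta(K_t)$ and Serre's thin sets, which is your Hilbert-irreducibility step) to discard the squares. Your explicit treatment of the upper bound --- ruling out $\Cn{21}$ via \Cref{rem1} and $\Cn 2\times\Cn{14}$ via the Bruin--Najman theorem --- is correct and in fact more complete than the paper, which leaves this implicit.

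There is one concrete point where your plan, as written, would stall. You assume the explicit search for a $t_0$ with $\Delta(K_{t_0})>0$ succeeds for every family in \cite{jky}, but the paper reports that it fails for $\Cn 2\times\Cn{12}$: no specialization of that family produces a totally real field. The paper patches this case separately by using the fact that $X_1(2,12)$ is an elliptic curve and exhibiting a totally real cubic field with Galois group $S_3$ over which it has positive rank, which yields infinitely many noncuspidal points and hence infinitely many $j$-invariants directly, without going through the parametric family. You would need this (or some equivalent) fallback for that one group; everything else in your argument goes through as in the paper.
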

\begin{proof}
Recall that totally real cubic fields $K$ that are not Galois over $\Q$ have $\Delta (K)>0$ and $\Delta(K)$ is not a square. The proof of this theorem follows from similar computations as \Cref{mainthm2}. We use the same families $E_t/K_t$ as in \Cref{mainthm2}, and only need to find values $t\in \R$ such that $\Delta(K_t)>0$, by using the same argument as in the proof of \Cref{mainthm2}. Once we find such a value, by the same argument as before, we see that there are infinitely many $t\in \Q$ such that $\Delta(K_t)>0$.

It remains to prove that for infinitely many of those $t$ such that $\Delta(K_t)>0$, it is true that $\Delta(K_t)$ is not a square (in $\Q$). Let $$C_1:y=\Delta(K_t)\text { and } C_2:y^2=\Delta(K_t).$$
Now there is an obvious degree 2 map $\phi:C_2\rightarrow C_1$ and hence $\phi(C_2(\Q))$ is a \textit{thin set} (see \cite[Chapter 9]{ser}) in $C_1(\Q)$ and $C_1(\Q) \backslash \phi(C_2(\Q))$ is dense, hence there are infinitely many $t\in \Q$ such that $\Delta(K_t)>0$.

We manage to find a $t\in \Q$ such that $\Delta(K_t)>0$ for every group apart from $\Cn 2\times \Cn {12}$. For $\Cn 2\times \Cn {12}$, we find a totally real cubic field $K$ whose Galois group is $S_3$ over which $X_1(2,12)$ (which is and elliptic curve) has positive rank.

\end{proof}

\end{document}